\newtheorem{theorem}{Theorem}[]
\newtheorem{corollary}[]{Corollary}[]
\newtheorem{lemma}[]{Lemma}[]
\newtheorem{problem}[]{Problem}[]
\theoremstyle{definition}
\newtheorem{defi}[]{Definition}[]
\newtheorem{remark}[]{Remark}[]
\numberwithin{equation}{section}
\DeclareMathOperator{\conv}{conv}
\DeclareMathOperator{\Id}{Id}
\DeclareMathOperator{\Sym}{Sym}
\DeclareMathOperator{\vol}{vol}
\DeclareMathOperator{\inter}{int}
\DeclareMathOperator{\relint}{relint}
\DeclareMathOperator{\bd}{bd}
\DeclareMathOperator{\aff}{aff}
\newcommand{\R}{\mathbb{R}}
\newcommand{\Z}{\mathbb{Z}}
\newcommand{\B}{\mathbf B}
\newcommand{\Sph}{\mathbb{S}}
\renewcommand{\P}{\mathcal{P}}
\newcommand{\F}{\mathcal{F}}
\newcommand{\C}{\mathcal{C}}
\begin{document}

\title[Maximum volume polytopes]{Maximum volume polytopes inscribed in the unit sphere}
\author[\'A. G.Horv\'ath and Z. L\'angi]{\'Akos G.Horv\'ath and Zsolt L\'angi}

\address{\'Akos G.Horv\'ath, Dept. of Geometry, Budapest University of Technology,
Egry J\'ozsef u. 1., Budapest, Hungary, 1111}
\email{ghorvath@math.bme.hu}
\address{Zsolt L\'angi, Dept. of Geometry, Budapest University of Technology,
Egry J\'ozsef u. 1., Budapest, Hungary, 1111}
\email{zlangi@math.bme.hu}

\subjclass[2010]{52B60, 52A40, 52A38}
\keywords{isoperimetric problem, volume inequality, polytope, circumradius.}

\begin{abstract}
In this paper we investigate the problem of finding the maximum volume polytopes, inscribed in the unit sphere of the $d$-dimensional Euclidean space, with a given number of vertices. We solve this problem for polytopes with $d+2$ vertices in every dimension, and for polytopes with $d+3$ vertices in odd dimensions. For polytopes with $d+3$ vertices in even dimensions we give a partial solution.
\end{abstract}

\maketitle

\section{Introduction}
To find the convex polyhedra in Euclidean $3$-space $\R^3$, with a given number of faces and with minimal isoperimetric quotient, is a centuries old question
of geometry: research in this direction perhaps started with the work of Lhuilier in the 18th century. A famous result of Lindel\"of \cite{L70}, published in the 19th century, yields a necessary condition for such a polyhedron: it states that any optimal polyhedron is circumscribed about a Euclidean ball, and this ball touches each face at its centroid. In particular, it follows from his result that, instead of fixing surface area while looking for minimal volume, we may fix the inradius of the polyhedron. Since the publication of this result, the same condition for polytopes in $d$-dimensional space $\R^d$ has been established (cf. \cite{G07}), and many variants of this problem have been investigated (cf., e.g. \cite{BB96}).
For references and open problems of this kind, the interested reader is referred to \cite{F93}, \cite{croft} or \cite{BMP05}.

We mention just one of them in more detail: the aim of the discrete isodiametric problem is to find, among $d$-polytopes with a given number of vertices, the ones
with maximal volume. For polytopes with $(d+2)$ vertices this question was answered by Kind and Kleinschmidt \cite{KK76}.
The solution for polytopes with $d+3$ vertices was published in \cite{KW03}, which later turned out to be incomplete (cf. \cite{KW05}), and thus,
this case is still open.

The dual of the original problem: to find the maximal volume polyhedra in $\R^3$ with a given number of vertices and inscribed in the unit sphere,
was first mentioned in \cite{ftl} in 1964.
A systematic investigation of this question was started with the paper \cite{bermanhanes} of Berman and Hanes in 1970, who found a necessary condition
for optimal polyhedra, and determined those with $n \leq 8$ vertices.
The same problem was examined in \cite{M02}, where the author presented the results of a computer-aided search for optimal polyhedra with $4 \leq n \leq 30$ vertices.
Nevertheless, according to our knowledge, this question, which is listed in both research problem books \cite{BMP05} and \cite{croft}, is still open for polyhedra with $n > 8$ vertices.

The aim of our paper is to investigate this problem for polytopes in arbitrary dimensions.
In Section~\ref{sec:prelim}, we collect the tools necessary for the investigation, and, in particular, by generalizing the methods of \cite{bermanhanes},
present a necessary condition for the optimality of a polytope.
In Section~\ref{sec:dplus2}, we find the maximum volume polytopes in $\R^d$, inscribed in the unit sphere $\Sph^{d-1}$, with $n=d+2$ vertices.
For $n=d+3$ vertices, our results can be found in Section~\ref{sec:dplus3}. In particular, we find the maximum for $d$ odd, over the family of all polytopes,
and the local maximizers for $d$ even, in the family of not cyclic polytopes.
Finally, in Section~\ref{sec:remarks}, we collect our remarks and propose some open problems.

In the paper, for any $p,q \in \R^d$, $|p|$ and $[p,q]$ denote the standard Euclidean norm of $p$, and the closed segment with endpoints $p$ and $q$,respectively.
The origin of the standard coordinate system of $\R^d$ is denoted by $o$.
If $v_1,v_2,\ldots,v_d \in \R^d$, then the $d \times d$ determinant with columns $v_1,v_2,\ldots,v_d$,
in this order, is denoted by $|v_1,\ldots,v_d|$.
The unit ball of $\R^d$, with $o$ as its center, is denoted by $\B^d$, and we set $\Sph^{d-1}=\bd \B^d$.

Throughout the paper, by a polytope we mean a convex polytope.
The vertex set of a polytope $P$ is denoted by $V(P)$.
We denote the family of $d$-dimensional polytopes, with $n$ vertices and inscribed in the unit sphere $\Sph^{d-1}$, by
$\P_d(n)$. We denote $d$-dimensional volume by $\vol_d$, and set $v_d(n) = \max \{ \vol_d(P) : P \in \P_d(n) \}$.
Note that by compactness, $v_d(n)$ exists for any value of $d$ and $n$.

\section{Preliminaries}\label{sec:prelim}

Let $P$ be a $d$-polytope inscribed in the unit sphere $\Sph^{d-1}$, and let $V(P) = \{ p_1,p_2,\ldots,p_n\}$.

Let $\C(P)$ be a simplicial complex with the property that $|\C(P)| = \bd P$, and that the vertices of $\C(P)$ are exactly the points of $V(P)$.
We orient $\C(P)$ in such a way that for each $(d-1)$-simplex $(p_{i_1},p_{i_2},\ldots, p_{i_d})$ in $C(P)$,
the determinant $|p_{i_1},\ldots,p_{i_d}|$ is positive; and call the $d$-simplex $\conv \{ o,p_{i_1},\ldots,p_{i_d}\}$ a \emph{facial simplex} of $P$.
We call the $(d-1)$-dimensional simplices of $\C(P)$ the \emph{facets} of $\C(P)$.

For the next definition, for $3$-polytopes, see also Section 2 of \cite{bermanhanes}.

\begin{defi}
Let $P \in \P_d(n)$ be a $d$-polytope with $V(P) = \{p_1,p_2,\ldots ,p_n\}$.
If for each $i$, there is an open set $U_i\subset \Sph^{d-1}$ such that $p_i\in U_i$, and for any $q \in U_i$, we have
\[
\vol_d\left( \left( \conv \left( V(P) \setminus \{p_i \} \right) \right) \cup \{ q \} \right) \leq \vol_d\left(P\right),
\]
then we say that $P$ satisfies \emph{Property Z}.
\end{defi}

\begin{lemma}\label{lem:propz}
Consider a polytope $P \in \P_d(n)$ satisfying Property Z.
For any $p \in V(P)$, let $\F_p$ denote the family of the facets of $\C(P)$ containing $p$.
For any $F \in \F_p$, set $A(F,p) = \vol_{d-1} \left( \conv \left( \left( V(F) \cup \{ o \} \right) \setminus \{ p \} \right) \right)$, and let $m(F,p)$ be the unit normal vector of the hyperplane, spanned by $\left( V(F) \cup \{ o \} \right) \setminus \{p\}$, pointing in the direction of the half space containing $p$.
Then
\begin{enumerate}
\item[(\ref{lem:propz}.1)] we have
\[
p = \frac{m}{|m|}, \mbox{ where } m=\sum_{F \in \F_p} A(F,p) m(F,p), \mbox{ and}
\]
\item[(\ref{lem:propz}.2)] $P$ is simplicial.
\end{enumerate}
\end{lemma}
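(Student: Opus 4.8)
The plan is to compute the gradient of the volume functional as a single vertex $p = p_i$ is moved on the sphere, and to exploit the fact that Property Z forces this gradient (restricted to the tangent space of $\Sph^{d-1}$ at $p$) to vanish. Write $Q = \conv(V(P)\setminus\{p\})$, so that for $q$ near $p$ on the sphere the competitor polytope is $\conv(Q\cup\{q\})$. Decompose $\vol_d(\conv(Q\cup\{q\}))$ as $\vol_d(Q)$ plus the volume of the "cap" consisting of the facets of $\C(P)$ incident to $p$, each such facet $F\in\F_p$ contributing the simplex $\conv((V(F)\setminus\{p\})\cup\{q\})$, whose volume is an affine-linear function of $q$. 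Summing, one gets $\vol_d(\conv(Q\cup\{q\})) = \vol_d(Q) + \frac{1}{d}\sum_{F\in\F_p} A(F,p)\,\langle m(F,p), q\rangle + (\text{const})$, provided the orientations and the sign of $m(F,p)$ are set up so every term is the actual (positive) contribution; here I use that $\conv((V(F)\cup\{o\})\setminus\{p\})$ is a $(d-1)$-simplex in the hyperplane with unit normal $m(F,p)$, so the signed volume of $\conv((V(F)\setminus\{p\})\cup\{q\})$ is $\frac1d A(F,p)(\langle m(F,p),q\rangle - h)$ for the appropriate offset $h$. Thus, up to a positive constant, the function $q\mapsto \vol_d(\conv(Q\cup\{q\}))$ agrees near $p$ with $q\mapsto \langle m, q\rangle$ where $m = \sum_{F\in\F_p} A(F,p)\,m(F,p)$.

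For (\ref{lem:propz}.1): Property Z says that $p$ is a local maximum of the linear function $q\mapsto\langle m,q\rangle$ on $U_i\subset\Sph^{d-1}$. The maximum of a linear functional on the sphere is attained exactly at the point in the direction of the functional's vector, so $p = m/|m|$ (the case $m=o$ must be excluded — this follows since otherwise $P$ would not be $d$-dimensional or the cap would be degenerate; more carefully, $m\neq o$ because $\langle m, p\rangle > 0$, as each $\langle m(F,p),p\rangle>0$ by the definition of $m(F,p)$ as pointing toward the halfspace containing $p$).

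For (\ref{lem:propz}.2): suppose some facet $G$ of $\C(P)$ is not a simplex, or, equivalently (since $\C(P)$ is a simplicial complex with $|\C(P)| = \bd P$), suppose some facet of the polytope $P$ itself contains more than $d$ vertices and is triangulated by $\C(P)$; then there is an interior vertex of that triangulation, i.e.\ a vertex $p$ of $P$ lying in the relative interior of a facet of $P$. For such $p$, all facets $F\in\F_p$ lie in one common hyperplane $H$, so all the normals $m(F,p)$ equal the common unit normal $n_H$ of $H$ directed toward $p$; hence $m = \left(\sum_{F\in\F_p} A(F,p)\right) n_H$ is a positive multiple of $n_H$, and part (\ref{lem:propz}.1) gives $p = n_H$, meaning $p\in\Sph^{d-1}$ is the point of $H$ closest to $o$ scaled out — but $p$ is supposed to lie in the relative interior of the facet $P\cap H$, which is impossible since $p\in\Sph^{d-1}$ while the relative interior of $P\cap H$ lies strictly inside $\B^d$ (as $P$ is inscribed and $H$ supports $P$). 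This contradiction shows no vertex of $P$ is a "flat" vertex of $\C(P)$, so every facet of $P$ is a simplex, i.e.\ $P$ is simplicial.

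The main obstacle I anticipate is the careful bookkeeping of signs and orientations in the volume decomposition: one must check that moving $q$ slightly off $p$ genuinely changes $\conv(V(P)\setminus\{p\}\cup\{q\})$ by replacing exactly the simplices of $\F_p$ with their analogues based at $q$ (no new facets appear and none disappear for $q$ close enough to $p$), and that with the chosen orientation of $\C(P)$ each replaced simplex contributes with a consistent sign, so that the linear approximation is exactly $\langle m,\cdot\rangle$ rather than some signed combination. Once this local structural stability of $\C(P)$ under small perturbations of a single vertex is nailed down, both conclusions follow cleanly from the elementary fact about maximizing a linear functional on a sphere.
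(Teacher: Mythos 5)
Your argument for (\ref{lem:propz}.1) is essentially the paper's: write the volume as a constant plus the sum of the volumes of the facial simplices through $p$, observe that each such simplex is a pyramid with apex $q$ over the base $\conv\left(\left(V(F)\cup\{o\}\right)\setminus\{p\}\right)$ whose hyperplane passes through $o$ (so your offset $h$ is $0$ and the dependence on $q$ is exactly $\frac{1}{d}A(F,p)\langle m(F,p),q\rangle$), and then maximize the linear functional $\langle m,\cdot\rangle$ on the sphere. (A small slip: the ``cap'' simplices you first name, $\conv\left(\left(V(F)\setminus\{p\}\right)\cup\{q\}\right)$, have only $d$ vertices and hence zero $d$-volume; what you actually compute two sentences later, the pyramids over the bases containing $o$, is the correct and intended decomposition.) Your observation that $m\neq o$ because $\langle m(F,p),p\rangle>0$ for each $F$ is a worthwhile detail the paper leaves implicit.

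Your proof of (\ref{lem:propz}.2), however, rests on a false claim. You assert that if a facet $G$ of the polytope $P$ has more than $d$ vertices, then its triangulation by $\C(P)$ has ``an interior vertex, i.e.\ a vertex $p$ of $P$ lying in the relative interior of a facet of $P$.'' This never happens: the vertices of $\C(P)$ are by construction exactly the vertices of $P$, and a vertex of a convex polytope is an extreme point, so it cannot lie in the relative interior of any face of positive dimension. A square facet of a $3$-polytope is triangulated by a diagonal into two triangles with no new vertex; your contradiction simply does not arise, and the argument proves nothing about such $P$. The genuine obstruction, which the paper exploits, is different: a non-simplex facet $G$ (say with $d+1$ vertices, written as $\conv(S_1\cup S_2)$ with $\relint S_1\cap\relint S_2$ a point) admits two distinct triangulations $\F_1$ and $\F_2$ using only $V(G)$, and the vector $m$ of part (\ref{lem:propz}.1) at a vertex $p_1\in S_1$ must be the same whichever triangulation one builds $\C(P)$ from; comparing the two expressions via the Minkowski relation $\sum_i\vol_{s-1}(F_i)\,m_i=0$ for the facets of a simplex yields the contradiction. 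You would need to replace your paragraph for (\ref{lem:propz}.2) with an argument of this kind (or some other mechanism); as written, part (2) is unproved.
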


\begin{proof}
For any $p \in V(P)$, the volume of $P$ can be written as
\[
\vol_d(P)= V + \sum_{F \in \F_p} \vol_d \left( \conv (F \cup \{ o \} )\right) = V + \sum_{F \in \F_p} \frac{1}{d} A(F,p) \langle m(F,p),p \rangle ,
\]
where $V$ is the sum of the volumes of the facial simplices of $\C(P)$, \emph{not} containing $p$.
From this it follows that
\[
\vol_d(P) = V + \frac{1}{d} \langle p, m \rangle.
\]
Now, since $p \in \Sph^{d-1}$, if $\langle p,m \rangle < |m|$, then for any open set $U \subset \Sph^{d-1}$, there is a point $q \in U$
such that $\langle p,m \rangle < \langle q,m \rangle$, implying that
$\vol_d(P) < \vol_d\left(\conv \left( \left( V(P) \cup \{ q \} \right) \setminus \{p\} ) \right) \right)$, which contradicts our assumption
that $P$ satisfies Property Z. Thus, we have $\langle p, m \rangle=|m|$, or in other words, $p=\frac{m}{|m|}$.

Now assume that $P$ is not simplicial, that is, that some facet $F$ of $P$ is a simplex.
First, we consider the case that $F$ has $d+1$ vertices, and, as a $(d-1)$-polytope, it is simplicial.
Then $F$ can be written in the form $F=\conv (S_1 \cup S_2)$, where $S_1$ and $S_2$ are two simplices with $\dim S_1 + \dim S_2 = \dim F = d-1$,
and $S_1 \cap S_2$ is a singleton $\{ r \}$ in the relative interiors of both $S_1$ and $S_2$ (cf. \cite{grunbaum}).
Let $V(S_1) = \{ p_i : i=1,2,\ldots, m \}$, and $V(S_2)= \{ q_j : j=1,2,\ldots, d+1-m \}$.
Then we may triangulate $F$ in two different ways:
\[
\F_1 = \left\{ \conv (V(F) \setminus \{ p_i \}): i=1,2,\ldots,m \right\}
\]
and
\[
\F_2 = \left\{ \conv (V(F) \setminus \{ q_i \}): i=1,2,\ldots,d+1-m \right\}.
\]
Observe that the union of the elements of $\F_1$ containing $p_1$ is the closure of $F \setminus \conv (V(F) \setminus \{ p_1 \})$, whereas for $\F_2$ it is $F$.
Recall that for any simplex in $\R^s$, with external unit facet normals $m_1, m_2, \ldots, m_{s+1}$ belonging to the facets $F_1, F_2, \ldots, F_{s+1}$, respectively,
we have
\begin{equation}\label{eq:notsimplicial}
\sum_{i=1}^{s+1} \vol_{s-1}(F_i) m_i = 0.
\end{equation}
On the other hand, since the quantity in (\ref{lem:propz}.1) must be independent from the triangulation, by (\ref{eq:notsimplicial}) we have reached a contradiction.
We remark that if $F$ is not simplicial, then $F$ is a $(d-k-1)$-fold pyramid over a $k$-polytope with $(k+2)$ vertices (cf. \cite{grunbaum}), for which a straightforward modification of our argument yields the statement.

Finally, we consider the case that $F$ has more than $d+1$ vertices.
Choose a set $S$ of $d+1$ vertices of $F$.
Note that any triangulation of $\conv S$ for any $S \subseteq V(F)$ can be extended to a triangulation of $F$; this can be easily shown by induction.
Thus, the assertion follows by applying the argument of the previous paragraph for $\conv S$.
\end{proof}

\begin{remark}\label{rem:hyperplane}
Assume that $P \in \P_d(n)$ satisfies Property Z, and for some $p \in V(P)$, all the vertices of $P$ adjacent to $p$ are contained in a hyperplane $H$.
Then the supporting hyperplane of $\Sph^{d-1}$ at $p$ is parallel to $H$, or in other words, $p$ is a normal vector to $H$.
Thus, in this case all the edges of $P$, starting at $p$, are of equal length.
\end{remark}

\begin{lemma}\label{lem:note2}
Let $P \in \P_d(n)$ satisfy Property Z, and let $p \in V(P)$. Let $q_1, q_2 \in V(P)$ be adjacent to $p$.
Assume that any facet of $P$ containing $p$ contains at least one of $q_1$ and $q_2$, and for any $S \subset V(P)$ of cardinality $d-2$, $\conv (S \cup \{p,q_1\})$ is a facet of $P$ not containing $q_2$ if, and only if $\conv (S \cup \{p,q_2\})$ is a facet of $P$ not containing $q_1$. Then $|q_1-p| = |q_2-p|$.
\end{lemma}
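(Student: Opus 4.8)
The plan is to reduce everything to a statement about signs of $d\times d$ determinants. Since $p,q_1,q_2\in\Sph^{d-1}$, we have $|q_i-p|^2=2-2\langle p,q_i\rangle$, so it suffices to prove $\langle p,q_1\rangle=\langle p,q_2\rangle$; and by Lemma~\ref{lem:propz}, $P$ is simplicial and $p=m/|m|$ with $m=\sum_{F\in\F_p}A(F,p)\,m(F,p)$, so it is enough to show $\langle m,q_1\rangle=\langle m,q_2\rangle$. Throughout I assume, as one may, that $o\in\inter P$; since $\aff F\cap P=F$ for every facet $F$ and the affine hull of a ridge meets $P$ only in that ridge, this forces $o$ to lie outside all of these affine subspaces, which makes every simplex and normal vector occurring below non-degenerate.

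First I would split $\F_p$ into the facets containing both $q_1$ and $q_2$, those containing $q_1$ but not $q_2$, and those containing $q_2$ but not $q_1$; by the hypothesis these three classes cover $\F_p$. A facet $F$ of the first class has $o,q_1,q_2$ in the linear hyperplane spanned by $(V(F)\cup\{o\})\setminus\{p\}$, so $\langle m(F,p),q_1\rangle=\langle m(F,p),q_2\rangle=0$ and it drops out. A facet $F$ of the second class equals $\conv(\{p,q_1\}\cup S)$ with $|S|=d-2$ and $q_2\notin S$ ($P$ is simplicial), and $q_1$ lies in the hyperplane spanned by $\{o\}\cup S\cup\{q_1\}$, so $\langle m(F,p),q_1\rangle=0$ and $F$ contributes only the term $A(F,p)\langle m(F,p),q_2\rangle$ to $\langle m,q_2\rangle$; symmetrically a facet $F'=\conv(\{p,q_2\}\cup S')$ of the third class contributes only $A(F',p)\langle m(F',p),q_1\rangle$ to $\langle m,q_1\rangle$. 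The hypothesis pairs off the second and third classes via $F=\conv(\{p,q_1\}\cup S)\leftrightarrow F'=\conv(\{p,q_2\}\cup S)$, so the whole statement reduces to proving
\[
A(F,p)\,\langle m(F,p),q_2\rangle=A(F',p)\,\langle m(F',p),q_1\rangle
\]
for each such pair.

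To evaluate the two sides I would use the determinant $|v_1,\dots,v_d|$. Writing $S=\{s_1,\dots,s_{d-2}\}$ and letting $w$ be the vector with $\langle w,x\rangle=|s_1,\dots,s_{d-2},q_1,x|$ for all $x\in\R^d$, one has $w\perp\lin(S\cup\{q_1\})$ and $|w|=(d-1)!\,A(F,p)$; since $m(F,p)$ is the unit normal of $\lin(S\cup\{q_1\})$ pointing toward $p$, this gives $A(F,p)\,m(F,p)=\frac{\varepsilon_1}{(d-1)!}\,w$ with $\varepsilon_1=\operatorname{sgn}|s_1,\dots,s_{d-2},q_1,p|\in\{\pm1\}$. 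Hence $A(F,p)\langle m(F,p),q_2\rangle=\frac{\varepsilon_1}{(d-1)!}|s_1,\dots,s_{d-2},q_1,q_2|$, and likewise $A(F',p)\langle m(F',p),q_1\rangle=\frac{\varepsilon_2}{(d-1)!}|s_1,\dots,s_{d-2},q_2,q_1|=-\frac{\varepsilon_2}{(d-1)!}|s_1,\dots,s_{d-2},q_1,q_2|$, where $\varepsilon_2=\operatorname{sgn}|s_1,\dots,s_{d-2},q_2,p|$. Thus the pairwise identity is equivalent to $\varepsilon_1=-\varepsilon_2$, i.e. to the claim that $q_1$ and $q_2$ lie on opposite sides of the hyperplane $H=\aff(\{o,p\}\cup S)=\lin(S\cup\{p\})$.

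This separation is the heart of the matter. Since $P$ is simplicial, $F$ and $F'$ are precisely the two facets of $P$ through the ridge $R=\conv(\{p\}\cup S)$, so $\aff R=\aff F\cap\aff F'$ and $H\supseteq\aff R$. Choose affine functionals $\ell_F,\ell_{F'}$ vanishing on $\aff F,\aff F'$ and negative on $\inter P$; then $\ell_F(q_1)=\ell_{F'}(q_2)=0$, while $\ell_F(q_2),\ell_{F'}(q_1),\ell_F(o),\ell_{F'}(o)<0$ (a vertex of $P$ on a facet hyperplane must lie in that facet, and $o\in\inter P$). The affine functionals vanishing on the $(d-2)$-flat $\aff R$ form a two-dimensional space spanned by $\ell_F$ and $\ell_{F'}$, so the functional $\ell$ cutting out $H$ satisfies $\ell=\lambda\ell_F+\mu\ell_{F'}$; evaluating at $o$ gives $\lambda\ell_F(o)+\mu\ell_{F'}(o)=0$ with $\ell_F(o),\ell_{F'}(o)<0$, so $\lambda$ and $\mu$ are nonzero and of opposite sign, and then $\ell(q_1)=\mu\,\ell_{F'}(q_1)$ and $\ell(q_2)=\lambda\,\ell_F(q_2)$ have opposite signs, as required. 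The main obstacle, as this outline suggests, is precisely this sign bookkeeping, together with the non-degeneracy verifications (all of them consequences of $o\in\inter P$) and the determinant identity for $A(F,p)m(F,p)$ that make the reduction legitimate.
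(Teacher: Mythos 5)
Your proof is correct and follows essentially the same route as the paper: reduce via Lemma~\ref{lem:propz} to showing $\langle m,q_1\rangle=\langle m,q_2\rangle$, split $\F_p$ into facets containing both, only $q_1$, or only $q_2$, and show the paired facets' determinant contributions cancel. The only real difference is that you explicitly prove the sign claim (that $q_1$ and $q_2$ are separated by $\lin(S\cup\{p\})$, via the two facets through the ridge $\conv(\{p\}\cup S)$), where the paper compresses this into the phrase ``using a suitable labelling of the vertices''; your standing assumption $o\in\inter P$ is implicit in the paper's orientation convention as well.
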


\begin{proof}
Let
\[
U = \{ S \subset V(P): \conv(S \cup\{p,q_1\} ) \mbox{ is a facet of } P, \mbox{ but } p,q_1,q_2 \notin S\},
\]
and let
\[
W = \{ S \subset V(P): \conv(S \cup\{p,q_1,q_2\} ) \mbox{ is a facet of } P, \mbox{ but } p,q_1,q_2 \notin S\}.
\]

Note that by our conditions, $U$ is also the family of subsets of $V(P)$, not containing $p,q_1,q_2$, such that $\conv (S \cup \{p,q_2 \} )$
is a facet of $P$.

Then, using a suitable labelling of the vertices of $P$, the total volume $V$ of the facial simplices containing $p$ can be written as
\[
V = \frac{1}{d!} \sum_{ \{ r_{i_1},\ldots,r_{i_{d-2}} \} \in U} \left( |p,q_1,r_{i_1},\ldots,r_{i_{d-2}}| - |p,q_2,r_{i_1},\ldots,r_{i_{d-2}}| \right) +
\]
\[
+\frac{1}{d!} \sum_{ \{ r_{i_1},\ldots,r_{i_{d-3}} \} \in W} |p,q_1,q_2,r_{i_1},\ldots,r_{i_{d-3}}|.
\]

Observe that for any $\{ r_{i_1},\ldots,r_{i_{d-3}} \} \in W$, we have $|p,q_2,q_2,r_{i_1},\ldots,r_{i_{d-3}}| = 0$, which, for every element of $W$, we may subtract from $V$ without changing its value.
Thus, for some suitable finite set $X \subset (\R^d)^{d-2}$ of $(d-2)$-tuples of points in $\R^d$, we have
\[
V = \frac{1}{d!} \sum_{(v_1,\ldots,v_{d-2}) \in X} |p,q_1-q_2,v_1,\ldots,v_{d-2}|.
\]

Let $f:\R^d \to \R$ be the linear functional $f(x) = \frac{1}{d!} \sum_{(v_1,\ldots,v_{d-2}) \in X} |x,q_1-q_2,v_1,\ldots,v_{d-2}|$.
Since $P$ satisfies Property Z, we have that $p$ is a normal vector of the hyperplane $\{ x \in \R^d : f(x) = 0\}$.
On the other hand, $f(q_2-q_1) = 0$, due to the properties of determinants.
Thus, $q_2-q_1$ and $p$ are perpendicular, from which it readily follows that $\langle p,q_1 \rangle = \langle p,q_2 \rangle$, and hence, $|q_1-p| = |q_2-p|$.
\end{proof}

Corollary~\ref{cor:simplex} is a straightforward consequence of Lemma~\ref{lem:note2} or, equivalently, Remark~\ref{rem:hyperplane}.

\begin{corollary}\label{cor:simplex}
If $P \in \P_d(d+1)$ and $\vol_d(P) = v_d(d+1)$, then $P$ is a regular simplex inscribed in $\Sph^{d-1}$.
\end{corollary}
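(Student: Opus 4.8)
The plan is to show that a volume-maximizer $P\in\P_d(d+1)$ must satisfy Property Z, and then to extract enough metric information from Lemma~\ref{lem:propz} (or, equivalently, from Remark~\ref{rem:hyperplane}) to force all edges of $P$ to have equal length, which in turn forces $P$ to be a regular simplex. First I would observe that a polytope with $d+1$ vertices in $\R^d$ is necessarily a simplex, so $P=\conv\{p_1,\dots,p_{d+1}\}$ with all $p_i\in\Sph^{d-1}$, and that maximality over the compact family $\P_d(d+1)$ immediately yields Property Z: if moving a single vertex $p_i$ on $\Sph^{d-1}$ could increase the volume, $P$ would not be optimal, so the defining inequality of Property Z holds for each $i$ with, say, $U_i=\Sph^{d-1}\setminus\{-p_i\}$ (or any neighborhood).

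Next I would fix a vertex $p=p_i$ and apply the key structural fact: in a simplex, the $d$ vertices adjacent to $p$ are precisely the remaining vertices $p_j$ ($j\neq i$), and they span the facet $F_i$ of $P$ opposite to $o$'s ``missing'' vertex — more to the point, all vertices of $P$ adjacent to $p$ lie in a single hyperplane $H_i=\aff\{p_j:j\neq i\}$. Remark~\ref{rem:hyperplane} then applies verbatim: since $P$ satisfies Property Z and every neighbor of $p$ lies in $H_i$, the supporting hyperplane of $\Sph^{d-1}$ at $p$ is parallel to $H_i$, so $p$ is a normal vector to $H_i$, and consequently all edges of $P$ emanating from $p$ have equal length. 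Carrying this out for every vertex $p_i$ gives that, for each $i$, the $d$ edges at $p_i$ are mutually equal; one then needs the short combinatorial observation that ``every edge at every vertex has the same length'' propagates across the edge graph of the simplex (which is complete, $K_{d+1}$) to the conclusion that \emph{all} $\binom{d+1}{2}$ edges of $P$ share a common length $\ell$.

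Finally, a polytope inscribed in $\Sph^{d-1}$ all of whose edges have equal length $\ell$, with vertex set of size $d+1$, is a regular simplex: equal edge lengths mean $\langle p_i,p_j\rangle=1-\ell^2/2$ is the same for all $i\neq j$, so the Gram matrix of the $p_i$ is $(1-c)I+cJ$ for the constant $c=1-\ell^2/2$, which is invariant under the symmetric group permuting the vertices; hence the isometry group acts transitively on $V(P)$ in a way identifying $P$ with the standard regular simplex inscribed in $\Sph^{d-1}$. I would phrase this last step as: the configuration $\{p_1,\dots,p_{d+1}\}$ with pairwise equal inner products, together with $\sum p_i$ being (by Lemma~\ref{lem:propz}.1 applied symmetrically, or directly from the Gram structure) forced to be $o$, is unique up to orthogonal transformation.

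I expect the only genuinely delicate point to be the propagation step in the middle paragraph — i.e. arguing cleanly that ``all edges at $p_i$ equal'' for every $i$ implies ``all edges equal.'' For the simplex this is essentially trivial since any two edges share a common endpoint through a short chain in $K_{d+1}$ (indeed any two edges $[p_i,p_j]$ and $[p_k,p_l]$ are linked by an edge at a shared vertex when they meet, and otherwise by a length-two chain), so I would dispatch it in a sentence. Everything else is routine: Property Z from compactness, Remark~\ref{rem:hyperplane} as a black box, and the Gram-matrix rigidity of an equilateral inscribed $(d+1)$-point set. The corollary is really just the assembly of these three already-available ingredients.
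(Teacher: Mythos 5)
Your proposal is correct and follows exactly the route the paper intends: the paper gives no separate argument for this corollary beyond declaring it a straightforward consequence of Lemma~\ref{lem:note2} or, equivalently, Remark~\ref{rem:hyperplane}, which is precisely the ingredient you use (maximality gives Property Z, the remark gives equal edge lengths at each vertex, and the equilateral inscribed simplex is regular). Your write-up simply supplies the routine details the authors omit.
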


For Corollary~\ref{cor:bipyramid}, recall that if $K$ is a $(d-1)$-polytope in $\R^d$, and $[p_1,p_2]$ is a segment intersecting the relative interior of $K$
at a singleton different from $p_1$ and $p_2$, then $\conv (K \cup [p_1,p_2])$ is a \emph{$d$-bipyramid} with base $K$ and apexes $p_1,p_2$ (cf. \cite{grunbaum}).

\begin{corollary}\label{cor:bipyramid}
Let $P \in \P_d(n)$ be combinatorially equivalent to a $d$-bipyramid. Assume that $P$ satisfies Property Z.
Then $P$ is a $d$-bipyramid, its apexes $p_1,p_2$ are antipodal points, its base $K$ and $[p_1,p_2]$ lie in orthogonal linear subspaces of $\R^d$, and $K$ satisfies Property Z in the hyperplane $\aff K$.
\end{corollary}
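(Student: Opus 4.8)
The plan is to read off the geometry of $P$ from the combinatorics of a bipyramid together with Property Z, using Lemma~\ref{lem:propz}, Lemma~\ref{lem:note2} and Remark~\ref{rem:hyperplane}. By Lemma~\ref{lem:propz}, $P$ is simplicial, so any bipyramid combinatorially equivalent to $P$ has a simplicial $(d-1)$-dimensional base. I would let $p_1,p_2 \in V(P)$ be the two vertices corresponding to the apexes, and set $Q = V(P) \setminus \{p_1,p_2\}$. Then, combinatorially: $p_1$ and $p_2$ are non-adjacent, and each is adjacent to every vertex of $Q$; every facet of $P$ contains exactly one of $p_1,p_2$; and for $p_1,p_2 \notin S \subseteq V(P)$, the set $S \cup \{p_1\}$ spans a facet of $P$ exactly when $S \cup \{p_2\}$ does, namely when $S \subseteq Q$ spans a facet of the base.

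First I would fix a vertex $q \in Q$ and apply Lemma~\ref{lem:note2} with $p := q$ and $\{q_1,q_2\} := \{p_1,p_2\}$; its hypotheses are precisely the combinatorial facts just listed. This yields $|q-p_1| = |q-p_2|$, so every vertex of $Q$ lies on the perpendicular bisector hyperplane $H$ of $[p_1,p_2]$; since $|p_1| = |p_2| = 1$, the origin lies on $H$ as well, hence $H$ is a linear hyperplane. Next I would apply Remark~\ref{rem:hyperplane} to $p_1$, and then to $p_2$: every vertex adjacent to $p_1$ lies in the hyperplane $H$, so $p_1$, and likewise $p_2$, is a unit normal of $H$. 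As $H^{\perp}$ is one-dimensional and $p_1 \neq p_2$, this forces $p_2 = -p_1$; thus $p_1,p_2$ are antipodal and $[p_1,p_2] \subseteq H^{\perp}$ is orthogonal to $H$.

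It then remains to check that $P$ is an honest $d$-bipyramid with base $K := \conv Q$ and that $K$ has Property Z in $\aff K$. Since $p_1 = -p_2$ is a unit normal of $H$, among the vertices of $P$ only $p_1$ lies strictly on one side of $H$ and only $p_2$ strictly on the other, and $\{p_1,p_2\}$ is not an edge of $P$; hence no edge of $P$ crosses $H$ transversally, so $P \cap H$ is a $(d-1)$-polytope whose vertex set is exactly $Q$, i.e. $P \cap H = K$ and $\aff K = H$. Moreover $o = \tfrac12(p_1+p_2)$ cannot lie on a facet $F$ of $P$: writing $c = \max_{x \in P}\langle x,w\rangle$ for an outer normal $w$ of $F$, both $\langle p_1,w\rangle$ and $\langle p_2,w\rangle$ are at most $c$ but average to $\langle o,w\rangle = c$, so both equal $c$ and $F$ would contain both apexes, a contradiction. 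Hence $o \in \inter P$, so $o \in \inter P \cap H \subseteq \relint K$, and therefore $[p_1,p_2]$ meets $\relint K$; thus $P = \conv(K \cup [p_1,p_2])$ is a $d$-bipyramid with base $K \subseteq H$ and antipodal apexes $p_1,p_2 \in H^{\perp}$. Finally, $K$ is inscribed in the unit sphere $\Sph^{d-1} \cap H$ of $H$; for a vertex $q$ of $K$ and a point $q'$ of $\Sph^{d-1} \cap H$ near $q$, the polytope $\conv\bigl((V(P) \setminus \{q\}) \cup \{q'\}\bigr)$ is again a bipyramid with the same apexes $p_1 = -p_2$, which lie at distance $1$ from $H$, so its volume equals $\tfrac{2}{d}\vol_{d-1}\bigl(\conv((V(K) \setminus \{q\}) \cup \{q'\})\bigr)$, while $\vol_d(P) = \tfrac{2}{d}\vol_{d-1}(K)$; hence Property Z of $P$ at $q$ gives $\vol_{d-1}\bigl(\conv((V(K) \setminus \{q\}) \cup \{q'\})\bigr) \leq \vol_{d-1}(K)$ for all such $q'$, which is Property Z for $K$ in $\aff K$.

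The routine part is the first half, namely the combinatorial bookkeeping that makes Lemma~\ref{lem:note2} and Remark~\ref{rem:hyperplane} applicable. I expect the main obstacle to be the second half: those tools deliver only the metric relations $|q-p_1| = |q-p_2|$ and $p_i \perp H$, and one still has to upgrade the combinatorial bipyramid into a genuine geometric one — in particular, to pin the center $o$ inside $\relint K$ — before $\vol_d(P)$ can be split into two height-$1$ cones over $K$ and Property Z can be pushed down to $K$.
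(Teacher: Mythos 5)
Your proof is correct and takes essentially the same route as the paper's: Lemma~\ref{lem:note2}, applied at each base vertex with the two apexes playing the roles of $q_1,q_2$, gives $|q-p_1|=|q-p_2|$, and the rest is geometric bookkeeping. The paper's own proof is a much terser version of the same argument --- it asserts the antipodality, the orthogonal splitting and the descent of Property Z to $K$ essentially without justification --- so the steps you single out as the real work (invoking Remark~\ref{rem:hyperplane} to force $p_2=-p_1$ rather than merely $p_1-p_2\perp\aff K$, pinning $o$ in $\relint K$, and the height-one cone decomposition giving $\vol_d=\tfrac{2}{d}\vol_{d-1}$) are exactly the ones the paper leaves implicit, and you fill them in correctly.
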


\begin{proof}
Let $p_1$ and $p_2$ be the vertices of $P$ corresponding to the apexes of the combinatorially equivalent $d$-bipyramid.
Then, by Lemma~\ref{lem:note2}, for any $q \in V(P) \setminus \{p_1,p_2\}$, we have $|p_2-q| = |p_1-q|$.
Thus, $P$ is a $d$-bipyramid with apexes $p_1,p_2$ and base $K$.
Furthermore, $K$ and $[p_2,p_1]$ are in orthogonal linear subspaces of $\R^d$.
Finally, applying Lemma~\ref{lem:propz} to $K$, we obtain that $K$ satisfies Property Z in $\aff K$.
\end{proof}

Corollary~\ref{cor:crosspolytope} is a special case of Corollary~\ref{cor:bipyramid}.

\begin{corollary}\label{cor:crosspolytope}
If $P \in \P_{d}(2d)$ has maximal volume in the combinatorial class of cross-polytopes inscribed in $\Sph^{d-1}$, then it is a regular cross-polytope.
\end{corollary}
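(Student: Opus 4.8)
The plan is to prove, by induction on $d$, the slightly stronger statement that \emph{any} $P \in \P_d(2d)$ which is combinatorially equivalent to a cross-polytope and satisfies Property Z is a regular cross-polytope inscribed in $\Sph^{d-1}$. Corollary~\ref{cor:crosspolytope} then follows at once: if $P$ has maximal volume in the combinatorial class of cross-polytopes inscribed in $\Sph^{d-1}$, then $P$ satisfies Property Z. Indeed, a cross-polytope is simplicial, so the face lattice of $P$ is stable under small perturbations of a single vertex; hence for each $p_i \in V(P)$ and each $q \in \Sph^{d-1}$ sufficiently close to $p_i$, the polytope $\conv\bigl((V(P)\setminus\{p_i\})\cup\{q\}\bigr)$ has vertex set $(V(P)\setminus\{p_i\})\cup\{q\}$, lies in $\P_d(2d)$, and is again combinatorially a cross-polytope, so its volume is at most $\vol_d(P)$, which is exactly Property Z.

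For the base case $d=1$ the claim is trivial, since the only element of $\P_1(2)$ is $[-1,1]$, the regular $1$-dimensional cross-polytope. (One may equally take $d=2$ as the base case: there Corollary~\ref{cor:bipyramid} forces $P$ to be a quadrilateral with antipodal apexes $\pm p_1$ and base an $o$-symmetric unit segment orthogonal to $[p_1,-p_1]$, i.e.\ a square.)

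For the inductive step, assume $d \geq 2$ and that the claim holds in dimension $d-1$. Since a $d$-dimensional cross-polytope is a $d$-bipyramid over a $(d-1)$-dimensional cross-polytope, Corollary~\ref{cor:bipyramid} applies to $P$: thus $P$ is a $d$-bipyramid with antipodal apexes $p_1$ and $p_2=-p_1$, its base $K$ is a $(d-1)$-dimensional cross-polytope lying in a $(d-1)$-dimensional linear subspace $M$ orthogonal to $[p_1,p_2]$, and $K$ satisfies Property Z within $\aff K = M$. Because $V(K) \subset V(P) \subset \Sph^{d-1}$ and $V(K) \subset M$, the vertices of $K$ lie on the unit sphere $\Sph^{d-1} \cap M$ of $M$; identifying $M$ with $\R^{d-1}$ we get $K \in \P_{d-1}(2(d-1))$, combinatorially a cross-polytope and satisfying Property Z. By the induction hypothesis $K$ is a regular $(d-1)$-cross-polytope, so $V(K) = \{\pm f_1,\ldots,\pm f_{d-1}\}$ for some orthonormal basis $f_1,\ldots,f_{d-1}$ of $M$. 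Since $|p_1|=1$ and $p_1 \perp M$, the family $\{p_1,f_1,\ldots,f_{d-1}\}$ is an orthonormal basis of $\R^d$ and $V(P) = \{\pm p_1,\pm f_1,\ldots,\pm f_{d-1}\}$; hence $P$ is a regular cross-polytope inscribed in $\Sph^{d-1}$, which completes the induction.

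The only point that demands a little care is the reduction in the first paragraph, namely that volume-maximality within the combinatorial class yields Property Z; this rests on the local stability of the face lattice of a simplicial polytope. Everything else is a direct, iterated application of Corollary~\ref{cor:bipyramid}, peeling off one pair of antipodal apexes at a time, so I do not expect any genuine obstacle beyond bookkeeping the identification of $\aff K$ with $\R^{d-1}$.
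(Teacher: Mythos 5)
Your proof is correct and follows essentially the same route as the paper, which simply observes that the corollary is a special case of Corollary~\ref{cor:bipyramid}; your induction peeling off antipodal apex pairs is the intended argument spelled out in full. The extra care you take in deducing Property Z from volume-maximality via the stability of the face lattice of a simplicial polytope is a detail the paper leaves implicit, and it is handled correctly.
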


\section{Polytopes with $n=d+2$ vertices}\label{sec:dplus2}

In the proof of our results on $d$-polytopes with $d+2$ or $d+3$ vertices, we use extensively the properties of the so-called Gale transform of a polytope
(cf. \cite{grunbaum}, \cite{ziegler}).

Consider a $d$-polytope $P$ with vertex set $V(P)=\{p_i:i=1,2,\ldots,n \}$. Regarding $\R^d$ as the hyperplane $\{ x_{d+1} = 1\}$ of $\R^{d+1}$, we can represent $V(P)$ as a $(d+1) \times n$ matrix $M$, in which each column lists the coordinates of a corresponding vertex in the standard basis of $\R^{d+1}$. Clearly, this matrix has rank $d+1$, and thus, it defines a linear mapping $L : \R^n \rightarrow \R^{d+1}$, with $\dim \ker L= n-d-1$.
Consider a basis $\{ w_1,w_2,\ldots, w_{n-d-1} \}$ of $\ker L$, and let $\bar{L}:\R^{n-d-1} \rightarrow \R^n$ be the linear map mapping the $i$th vector of the standard basis of $\R^{n-d-1}$ into $w_i$.
Then the matrix $\bar{M}$ of $\bar{L}$ is an $n\times (n-d-1)$ matrix of (maximal) rank $n-d-1$, satisfying the equation $M \bar{M} = O$, where $O$ is the matrix with all entries equal to zero. Note that the rows of $\bar{M}$ can be represented as points of $\R^{n-d-1}$.
For any vertex $p_i \in V(P)$, we call the $i$th row of $\bar{M}$ the \emph{Gale transform of $p_i$}, and denote it by $\bar{p}_i$.
Furthermore, the $n$-element multiset $\{ \bar{p_i} : i=1,2,\ldots, n \} \subset \R^{n-d-1}$ is called the \emph{Gale transform of $P$}, and is denoted by $\bar{P}$. If $\conv S$ is a face of $P$ for some $S \subset V(P)$, then the (multi)set of the Gale transform of the points of $S$ is called  a face of $\bar{P}$.
If $\bar{S}$ is a face of $\bar{P}$, then $\bar{P} \setminus \bar{S}$ is called a \emph{coface} of $\bar{P}$.

Let $V = \{ q_i: i=1,2,\ldots,n\} \subset \R^{n-d-1}$ be a (multi)set. We say that $V$ is a \emph{Gale diagram} of $P$, if for some Gale transform $P'$
the conditions $o \in \relint \conv \{ q_j : j \in I \}$ and $o \in \relint \conv \{ \bar{p}_j : j \in I \}$ are satisfied for the same subsets of $\{1,2,\ldots,n\}$. If $V \subset \Sph^{n-d-2}$, then $V$ is a \emph{normalized Gale diagram} (cf. \cite{lee}). A \emph{standard Gale diagram} is a normalized Gale diagram in which the consecutive diameters are equidistant. A \emph{contracted Gale diagram} is a standard Gale diagram which has the least possible number of diameters among all isomorphic diagrams. We note that each $d$-polytope with at most $d+3$ vertices may be represented by a contracted Gale diagram (cf. \cite{grunbaum} or \cite{ziegler}).

In the proofs, we need the following theorem from \cite{grunbaum} or also from \cite{ziegler}.

\begin{theorem}[\cite{grunbaum},\cite{ziegler}]\label{thm:Gale}
\begin{itemize}
\item[(i)] A multiset $\bar{P}$ of $n$ points in $\R^{n-d-1}$ is a Gale diagram of a $d$-polytope
$P$ with $n$ vertices if and only if every open half-space in $\R^{n-d-1}$ bounded by a hyperplane
through $o$ contains at least two points of $\bar{V}$ (or, alternatively, all the points of $\bar{P}$
coincide with $o$ and then $n=d+1$ and $P$ is a  $d$-simplex).
\item[(ii)] If $F$ is a facet of $P$, and $Z$ is the corresponding coface, then in any Gale
diagram $\bar{V}$ of $P$, $\bar{Z}$ is the set of vertices of a (non-degenerate) set with $o$ in its
relative interior.
\item[(iii)] A polytope $P$ is simplicial if and only if, for every hyperplane $H$ containing
$o  \in \R^{n-d-1}$, we have $o \notin \relint \conv (\bar{V} \cap H)$.
\item[(iv)] A polytope $P$ is a pyramid if and only if at least one point of $\bar{V}$ coincides with
the origin $o \in \R^{n-d-1}$.
\end{itemize}
\end{theorem}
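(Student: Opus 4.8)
The statement is the classical Gale duality theorem, so the plan is to first set up that duality in the coordinates already introduced (the $(d+1)\times n$ matrix $M$ with columns $\hat p_i := (p_i,1)$, and the matrix $\bar M$ with $M\bar M = O$), and then read off each of (i)--(iv).

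\emph{Step 1: the duality dictionary.} Using $M\bar M=O$ together with the rank hypotheses, I would identify the column space of $\bar M$ with $\ker L$, i.e. with the space of $c\in\R^n$ satisfying $\sum_i c_i\hat p_i=0$ (equivalently, with the affine dependences among the $p_i$), and its orthogonal complement — the row space of $M$ — with the set of evaluation vectors $(v(p_1),\dots,v(p_n))$ of affine functions $v$ on $\R^d$. Concretely: $\sum_i c_i\hat p_i=0$ iff $c_i=\langle\bar p_i,\mu\rangle$ for some $\mu\in\R^{\,n-d-1}$; and $(c_i)_i$ is the evaluation vector of an affine function iff $\sum_i c_i\bar p_i=0$ in $\R^{\,n-d-1}$.

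\emph{Step 2: the face--coface correspondence.} From Step 1 I would derive that, for $J\subseteq\{1,\dots,n\}$, the set $\conv\{p_j:j\in J\}$ is a proper face of $P$ if and only if $o\in\relint\conv\{\bar p_k:k\notin J\}$. Forward direction: a proper face is cut out by an affine function $v\le 0$ on $V(P)$ vanishing exactly on $\{p_j:j\in J\}$; then $c_i:=v(p_i)$ satisfies $\sum_i c_i\bar p_i=0$ with $c_j=0$ for $j\in J$ and $c_k<0$ otherwise, which rearranges to a strictly positive linear dependence among $\{\bar p_k:k\notin J\}$, i.e. $o\in\relint\conv\{\bar p_k:k\notin J\}$; the converse reverses these steps.

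\emph{Step 3: reading off (i)--(iv).} Part (iv): $P$ is a pyramid precisely when $\conv\{p_j:j\ne i\}$ is a facet for some $i$ (being a proper face with all but one vertex, it is automatically a facet by a dimension count), and by Step 2 this means $o\in\relint\conv\{\bar p_i\}$, i.e. $\bar p_i=o$. Part (i): $P$ has exactly $n$ vertices iff each singleton $\{p_i\}$ is a face, i.e. iff $o\in\relint\conv\{\bar p_k:k\ne i\}$ for every $i$; an open halfspace through $o$ containing at most one point would, for that index $i$, force the remaining $\bar p_k$ into a closed halfspace and so violate this (or collapse the diagram, giving $n=d+1$ and a simplex), while conversely the ``at least two points'' condition persists after deleting any single $\bar p_i$, and a separation argument then gives $o$ in the relative interior. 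For (ii) and (iii) I would first establish the dimension formula $\dim F=d-|\bar Z|+\dim\aff\bar Z$ for a proper face $F=\conv\{p_j:j\in J\}$ with coface $\bar Z$, by computing the dimension of the space of dependences of $\{\hat p_i\}$ supported on $J$ in two ways (directly it is $|J|-1-\dim F$; via $c_i=\langle\bar p_i,\mu\rangle$ it is $(n-d-1)-\dim\aff\bar Z$, using $o\in\aff\bar Z$). Then (ii) is the case $\dim F=d-1$, forcing $\dim\aff\bar Z=|\bar Z|-1$, i.e. $\bar Z$ affinely independent — the vertex set of a nondegenerate simplex with $o$ in its relative interior; and (iii) follows since $P$ is simplicial iff every proper face is a simplex iff every coface affinely spans $\R^{\,n-d-1}$ iff no coface lies in a hyperplane through $o$, which is then matched to the stated condition on hyperplanes $H$.

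\emph{Main obstacle.} The separation arguments are routine; the care is needed in making the face--coface correspondence precise and in handling the degeneracies intrinsic to Gale diagrams — that $\bar P$ is a multiset (points may coincide, in particular with $o$) and that $\aff\bar Z$ can be a proper subspace of $\R^{\,n-d-1}$, which is exactly where ``$o\in\relint\conv(\cdot)$'' must be read relative to the correct ambient flat. In particular, for the delicate direction of (iii) one must choose the hyperplane $H\ni o$ so that $\bar V\cap H$ lies inside $\aff\bar Z$, so that $o\in\relint\conv\bar Z\subseteq\relint\conv(\bar V\cap H)$. For a fully detailed treatment see \cite{grunbaum} or \cite{ziegler}.
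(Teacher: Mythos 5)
This is Theorem~\ref{thm:Gale}, which the paper does not prove at all: it is imported verbatim from \cite{grunbaum} and \cite{ziegler} as a known tool, so there is no in-paper argument to compare against. Your sketch is, in outline, the standard textbook proof of Gale duality, and the key steps are correct: the identification of $\ker L$ with the affine dependences and of the row space of $M$ with evaluation vectors of affine functions; the face--coface criterion ($\conv\{p_j : j \in J\}$ is a proper face iff $o \in \relint \conv \{\bar{p}_k : k \notin J\}$, via the sign pattern of a supporting affine functional); and the dimension count $\dim F = d - |\bar{Z}| + \dim \aff \bar{Z}$, from which (ii) and (iii) follow. Your closing remark correctly isolates the genuinely delicate point in (iii) (choosing $H \supseteq \lin \bar{Z}$ so that $\bar{V} \cap H \subseteq \aff \bar{Z}$, which is possible since a real projective space is not a finite union of proper subvarieties, and in the boundary case $\dim \lin \bar{Z} = n-d-2$ the unique $H$ works because the affine hulls then coincide). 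The one place where the sketch is thinner than it should be is part (i): the ``at least two points in every open half-space'' condition must be shown to imply not only that each $p_i$ would be a vertex, but also that the multiset is a Gale diagram of \emph{some} $d$-polytope in the first place (equivalently, that the corresponding point configuration is realizable with the right rank and that $o$ lies in the interior of $\conv \bar{P}$); you gesture at this with the separation argument but do not carry it out. Since you defer full details to \cite{grunbaum} and \cite{ziegler} --- exactly as the paper itself does --- this is an acceptable level of rigor for a result of this status.
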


\begin{remark}\label{rem:Galediagram}
We note that (ii) can be stated in a more general form: $F$ is a face of $P$ if, and only if, for the corresponding co-face $\bar{F}$ of $P$, we have $o \in \inter \conv \bar{Z}$.
\end{remark}

The main result in this section is as follows.

\begin{theorem}\label{thm:dplus2}
Let $P \in \P_d(d+2)$ have maximal volume over $\P_d(d+2)$.
Then $P=\conv (P_1 \cup P_2)$, where $P_1$ and $P_2$ are regular simplices of dimensions $\lfloor \frac{d}{2} \rfloor$ and $\lceil \frac{d}{2} \rceil$, respectively, inscribed in $\Sph^{d-1}$, and contained in orthogonal linear subspaces of $\R^d$.
Furthermore,
\[
v_d(d+2) = \frac{1}{d!} \cdot \frac{\left( \lfloor d/2 \rfloor + 1 \right)^{\frac{\lfloor d/2 \rfloor + 1}{2}} \cdot \left( \lceil d/2 \rceil + 1 \right)^{\frac{\lceil d/2 \rceil + 1}{2}}}{\lfloor d/2 \rfloor^{\frac{\lfloor d/2 \rfloor}{2}} \cdot \lceil d/2 \rceil^{\frac{\lceil d/2 \rceil}{2} }}
\]
\end{theorem}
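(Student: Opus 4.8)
The strategy is to determine the combinatorial type of the maximiser via its Gale diagram, then to use Property~Z to make it rigid, and finally to reduce to an elementary optimisation. Since $P$ has maximal volume in $\P_d(d+2)$, replacing any single vertex by a nearby point of $\Sph^{d-1}$ cannot increase the volume, so $P$ satisfies Property~Z; by Lemma~\ref{lem:propz} it is simplicial. The Gale diagram of $P$ lies in $\R^1$, and by Theorem~\ref{thm:Gale} (with $P$ simplicial, hence no Gale point at $o$) it consists, after normalisation, of $a+1$ points equal to $+1$ and $b+1$ points equal to $-1$, with $a+b=d$ and $1\le a\le b$. Reading off the cofaces in $\R^1$ (a subset is a coface if and only if it meets both signs), $P$ is combinatorially the free sum $\Delta^a\oplus\Delta^b$: its vertex set splits as $A\cup B$ with $|A|=a+1$, $|B|=b+1$, a subset $S$ is a face of $P$ if and only if $S\not\supseteq A$ and $S\not\supseteq B$, and the facets are precisely the sets $(A\cup B)\setminus\{x,y\}$ with $x\in A$ and $y\in B$.

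The technical heart is to show that $P$ is the symmetric representative of this class. Applying Lemma~\ref{lem:note2} to a vertex $p\in A$ and two further vertices $q_1,q_2\in A$ (this needs $|A|\ge 3$; if $|A|=2$ then $\conv A$ is a segment and nothing is needed) shows that $\conv A$ is a regular simplex, and likewise $\conv B$; applying it to $p\in A$, $q_1,q_2\in B$ shows that $|x-y|$ is one and the same number for all $x\in A$, $y\in B$. In each instance the hypotheses of Lemma~\ref{lem:note2} — that every facet through $p$ meets $\{q_1,q_2\}$, and that the families of facets through $\{p,q_1\}$ avoiding $q_2$ and through $\{p,q_2\}$ avoiding $q_1$ correspond — follow directly from the description of the facets above. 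Now write $x_i=c_A+u_i$ and $y_j=c_B+v_j$, where $c_A,c_B$ are the centroids of $A,B$, so that $\sum_i u_i=\sum_j v_j=0$ and, by the regularity just proved, $|u_i|=\rho_A$ and $|v_j|=\rho_B$ are constant. Expanding $|x_i-y_j|^2$ and summing over $i$, then over $j$, and similarly expanding $|x_i|^2=|y_j|^2=1$, one obtains $\langle c_A-c_B,u_i\rangle=\langle c_A-c_B,v_j\rangle=\langle u_i,v_j\rangle=\langle c_A,u_i\rangle=\langle c_B,v_j\rangle=0$ for all $i,j$. Since $\lin\{u_i\}$ and $\lin\{v_j\}$ have dimensions $a$ and $b$ and are mutually orthogonal, they are complementary subspaces of $\R^d$; feeding this back into the relations forces $c_A=c_B=o$ and $\rho_A=\rho_B=1$. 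Hence $P=\conv(P_1\cup P_2)$, where $P_1=\conv A$ and $P_2=\conv B$ are regular simplices of dimensions $a$ and $b$, each inscribed in the unit sphere of its affine hull, and these hulls are orthogonal complementary linear subspaces of $\R^d$.

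It remains to compute and optimise the volume. Decomposing the free sum $\conv(P_1\cup P_2)$, with $P_1\subset U$, $P_2\subset W$, $U\perp W$, $o\in\relint P_i$, into the $d$-simplices $\conv(F_1\cup F_2\cup\{o\})$ over facets $F_i$ of $P_i$ — whose volumes multiply because the defining determinant is block diagonal with respect to $U\oplus W$ — gives $\vol_d(P)=\frac{a!\,b!}{d!}\,\vol_a(P_1)\,\vol_b(P_2)$. Since a regular $k$-simplex of circumradius $1$ has volume $\frac{(k+1)^{(k+1)/2}}{k!\,k^{k/2}}$, this yields $\vol_d(P)=\frac{1}{d!}\cdot\frac{(a+1)^{(a+1)/2}(b+1)^{(b+1)/2}}{a^{a/2}b^{b/2}}$ with $b=d-a$. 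Writing the fraction as $\exp\left(\tfrac12\big(h(a)+h(d-a)\big)\right)$ with $h(t)=(t+1)\log(t+1)-t\log t$, a short computation gives $h''(t)=-\tfrac{1}{t(t+1)}<0$, so $a\mapsto h(a)+h(d-a)$ is strictly concave on $[1,d-1]$ and symmetric about $a=d/2$; it is therefore maximised over the admissible integers exactly when $a=\lfloor d/2\rfloor$, $b=\lceil d/2\rceil$. As the maximiser is one of the polytopes $P$ described above, it is this one, and this also gives the stated value of $v_d(d+2)$. The main obstacle is the rigidity step: extracting, purely from Property~Z, both the regularity of the two simplicial factors and their mutual position (concentric with $o$, spanning orthogonal complementary subspaces); the Gale-theoretic identification of the combinatorial type preceding it and the concavity argument following it are routine.
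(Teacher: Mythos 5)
Your proposal is correct and follows essentially the same route as the paper: Gale diagram to identify $P$ combinatorially as a free sum of two simplices, Lemma~\ref{lem:note2} to force the metric rigidity, the product formula for the volume, and a concavity argument for the optimal split of $d$. The only (harmless) deviations are that you obtain regularity of each factor by applying Lemma~\ref{lem:note2} within each vertex class rather than via Corollary~\ref{cor:simplex}, and that your centroid computation spells out the orthogonality-and-concentricity step that the paper asserts more tersely.
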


\begin{proof}
Without loss of generality, we may assume that $d \geq 3$, as otherwise the assertion is trivial.

In the proof we use a contracted Gale diagram $\bar{P}$ of $P$.
Since by Lemma~\ref{lem:propz} $P$ is simplicial, and since $d+2-d-1=1$, (iii) of Theorem~\ref{thm:Gale} yields that $\bar{P}$
consists of the points $-1$ and $1$ on the real line. We may assume that the multiplicity of $-1$ is $k+1$ and that of $1$ is $d+1-k$.
From (i) of Theorem~\ref{thm:Gale}, it follows that $2 \leq k \leq d$.
Without loss of generality, we may assume that $k+1 \leq d+1-k$, or in other words, that $k \leq \lfloor \frac{d}{2} \rfloor$.
By (ii) of Theorem~\ref{thm:Gale}, the facets of $\bar{P}$ are the complements of the pairs of the form $\{-1,1\}$. 

Let $V_+(P)$ be the set of vertices of $P$ represented by $1$ in $\bar{P}$, and let $V_-(P) = V(P) \setminus V_+(P)$.
Consider any $p \in V_+(P)$ and $q_1,q_2 \in V_-(P)$. Observe that both $q_1$ and $q_2$ are adjacent to $p$.
Furthermore, for $P$ and these three vertices the conditions of Lemma~\ref{lem:note2} are satisfied, which yields that $|q_2-p| = |q_1-p|$.
Hence, there is some $\delta > 0$ such that for any $p \in V_+(P)$ and $q \in V_-(P)$, we have $|q-p| = \delta$.
Thus, $V_+(P)$ and $V_-(P)$ are contained in orthogonal linear subspaces.
Since $P$ is $d$-dimensional, it follows that these subspaces are orthogonal complements of each other.

Let $P_1 = \conv V_+(P)$ and $P_2 = \conv V_-(P)$. Then $P_1$ is a $k$-dimensional, and $P_2$ is a $(d-k)$-dimensional simplex, and we have
\[
\vol_d(P) = \frac{k! (d-k)!}{d!} \vol_k(P_1) \vol_{d-k}(P_2).
\]
Now we can apply Corollary~\ref{cor:simplex}, which yields that $P_1$ and $P_2$ are regular.

It is well-known (and can be easily computed from its standard representation in $\R^k$) that the volume of a regular $k$-dimensional simplex
inscribed in $\Sph^{k-1}$ is $\frac{\left( k+1 \right)^{\frac{k+1}{2}}}{k^{\frac{k}{2}} k!}$.
Hence, we have
\[
\vol_d(P) = \frac{k! (d-k)!}{d!} \frac{\left( k+1 \right)^{\frac{k+1}{2}}}{k^{\frac{k}{2}} k!} \frac{\left( d-k+1 \right)^{\frac{d-k+1}{2}}}{(d-k)^{\frac{d-k}{2}} (d-k)!},
\]
or equivalently,
\[
\vol_d(P) = \frac{1}{d!}  \sqrt{ \left( 1+\frac{1}{k} \right)^k \left( 1+\frac{1}{d-k} \right)^{d-k}} \sqrt{(k+1)(d-k+1)}.
\]
We need to maximize this quantity for $k=1,2,\ldots,\lfloor \frac{d}{2} \rfloor$.
If $d$ is even, the assertion follows from the inequality for the arithmetic and the geometric means. If $d$ is odd, we may use the strict concavity of the
function $x \mapsto x \log \left( 1+\frac{1}{x} \right)$, $x > 0$.
\end{proof}

\begin{remark}
The use of Gale diagrams in the proof of Theorem~\ref{thm:dplus2} can be avoided if we recall the fact that any simplicial $d$-polytope is the convex hull of two simplices, having a single point, contained in the relative interiors of both simplices, as their intersection (cf. \cite{ziegler}).
The vertex sets of these simplices form the unique Radon partition of the point set.
\end{remark}

\section{Polytopes with $n=d+3$ vertices}\label{sec:dplus3}

Our main result is the following.
Before stating it, recall that a $d$-polytope with $n$ vertices is \emph{cyclic}, if it is combinatorially equivalent to the convex hull of $n$ points
on the moment curve $\gamma(t) = (t,t^2,\ldots, t^d)$, $t \in \R$.

\begin{theorem}\label{thm:dplus3}
Let $P \in \P_d(d+3)$ satisfy Property Z. If $P$ is even, assume that $P$ is not cyclic. Then
$P=\conv \{ P_1 \cup P_2 \cup P_3\}$, where $P_1$, $P_2$ and $P_3$ are regular simplices inscribed in $\Sph^{d-1}$ and contained in three mutually orthogonal linear subspaces of $\R^d$.
Furthermore:
\begin{itemize}
\item If $d$ is odd and $P$ has maximal volume over $\P_d(d+3)$, then the dimensions of $P_1$, $P_2$ and $P_3$ are $\lfloor d/3 \rfloor$ or 
$\lceil d/3 \rceil$. In particular, in this case we have
\[
\left( v_d(d+3) = \right) \vol_d(P) = \frac{1}{d!} \cdot \prod_{i=1}^3 (k_i+1)^{\frac{k_i+1}{2}} k_i^{- \frac{k_i}{2}},
\]
where $k_1+k_2+k_3 = d$ and for every $i$, we have $k_i \in \left\{ \lfloor \frac{d}{3} \rfloor,  \lceil \frac{d}{3} \rceil \right\}$. 
\item The same holds for the dimensions of $P_1$, $P_2$ and $P_3$, if $d$ is even, and $P$ has maximal volume over the family of \emph{not cyclic} elements of $\P_d(d+3)$, satisfying Property Z.
\end{itemize}
\end{theorem}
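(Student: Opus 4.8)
The plan is to mimic the proof of Theorem~\ref{thm:dplus2}, replacing one‑dimensional Gale diagrams by two‑dimensional ones. We may assume $d\ge 3$. By Lemma~\ref{lem:propz} the polytope $P$ is simplicial; in particular $P$ is \emph{not} a pyramid, since otherwise its base would be a facet of $P$ that is a $(d-1)$-polytope with $d+2>d$ vertices, hence not a simplex. Consequently, by Theorem~\ref{thm:Gale}(iii)--(iv), the contracted Gale diagram $\bar P$ is a multiset of points of $\Sph^1$, none at $o$, sitting at vertices of a regular $2r$-gon, with at most one point of each antipodal pair occupied, and with every open half-plane through $o$ containing at least two of them (Theorem~\ref{thm:Gale}(i)); this already forces $r\ge 3$. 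Writing the occupied positions as $r$ distinct rays with multiplicities $n_1,\dots,n_r\ge 1$, $\sum n_i=d+3$, Theorem~\ref{thm:Gale}(ii) and Remark~\ref{rem:Galediagram} describe the facets of $P$: they are obtained from the triples of rays whose convex hull contains $o$ in its interior, by deleting exactly one vertex from each of the three corresponding groups of vertices.

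The core of the argument is to show that Property Z forces the \emph{Mercedes configuration} — $r=3$, rays at $0^\circ,120^\circ,240^\circ$, multiplicities $n_i\ge 2$ — unless $d$ is even and $P$ is cyclic. The tool is Lemma~\ref{lem:note2}. If $X,Y$ are distinct vertex groups, $p\in X$, and $q_1,q_2\in Y$, then from the facet description above every facet through $p$ meets $\{q_1,q_2\}$ (a facet deletes at most one vertex of $Y$) and the symmetry hypothesis of Lemma~\ref{lem:note2} holds; hence, as soon as $q_1,q_2$ are both adjacent to $p$, we get $|q_1-p|=|q_2-p|$, and letting $p,q_1,q_2$ range (using $n_X,n_Y\ge 2$) all cross-distances $|x-y|$, $x\in X$, $y\in Y$, coincide, so $\aff X$ and $\aff Y$ span orthogonal subspaces. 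If this held for every pair of groups, a dimension count — exactly the step ``$P$ is $d$-dimensional, so the subspaces are complementary'' from the proof of Theorem~\ref{thm:dplus2} — would give $\sum(n_i-1)\ge d$, i.e. $r\le 3$. So for $r\ge 4$ some required adjacency must fail; analysing when a vertex of $X$ fails to be adjacent to a vertex of $Y$ (this can happen only when the groups are small) and iterating, the only configuration that is not excluded is $n_1=\dots=n_r=1$, i.e. the regular $(d+3)$-gon diagram. When $d$ is odd this diagram has antipodal points, contradicting simpliciality; when $d$ is even it is precisely the Gale diagram of $C_d(d+3)$, which is excluded by hypothesis. Carrying out this case analysis — verifying the hypotheses of Lemma~\ref{lem:note2} across all admissible multiplicity patterns and recognising the exceptional pattern as the cyclic one — is the main obstacle.

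Once the Mercedes structure is in hand the proof closes as in the $n=d+2$ case. Write $V(P)=V_1\sqcup V_2\sqcup V_3$ according to the three rays; each facet deletes one vertex from each $V_i$. Lemma~\ref{lem:note2} (applied as in the proof of Theorem~\ref{thm:dplus2}) produces $\delta_{ij}$ with $|p-q|=\delta_{ij}$ for $p\in V_i$, $q\in V_j$; hence the affine hulls $\aff V_i$ lie in mutually orthogonal linear subspaces, necessarily complementary since $\dim P=d$, so $P_i:=\conv V_i$ is a $k_i$-simplex with $k_i=n_i-1\ge 1$ and $k_1+k_2+k_3=d$. The orthogonality yields $\vol_d(P)=\frac{k_1!\,k_2!\,k_3!}{d!}\prod_{i=1}^3\vol_{k_i}(P_i)$, and restricting the perturbations permitted by Property Z at a vertex of $P_i$ to the subsphere $\aff V_i\cap\Sph^{d-1}$ shows that $P_i$ satisfies Property Z in $\aff V_i$; by Remark~\ref{rem:hyperplane} (equivalently Lemma~\ref{lem:note2}) each $P_i$ is then a regular simplex. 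This proves the first assertion, for both parities of $d$.

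Finally, inserting the volume $\frac{(k+1)^{(k+1)/2}}{k^{k/2}k!}$ of a regular $k$-simplex inscribed in $\Sph^{k-1}$ gives
\[
\vol_d(P)=\frac{1}{d!}\prod_{i=1}^3(k_i+1)^{\frac{k_i+1}{2}}\,k_i^{-\frac{k_i}{2}},
\]
so it remains to maximise this over integers $k_1,k_2,k_3\ge 1$ with $k_1+k_2+k_3=d$. Taking logarithms, we must maximise $\sum_i g(k_i)$ with $g(x)=(x+1)\log(x+1)-x\log x$; since $g''(x)=-\frac{1}{x(x+1)}<0$, the function $g$ is strictly concave (up to an additive term it is $x\mapsto x\log(1+1/x)$, as used in the proof of Theorem~\ref{thm:dplus2}), so the maximum is attained exactly when the $k_i$ are as equal as possible, i.e. $k_i\in\{\lfloor d/3\rfloor,\lceil d/3\rceil\}$. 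For $d$ even the same computation gives the maximum over the non-cyclic Property‑Z polytopes, since by the previous paragraphs all of these carry the Mercedes structure.
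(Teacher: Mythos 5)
Your endgame --- the orthogonal decomposition into three regular simplices via Lemma~\ref{lem:note2} and Remark~\ref{rem:hyperplane}, the product formula for the volume, and the maximization via strict concavity of $x\mapsto(x+1)\log(x+1)-x\log x$ --- coincides with the paper's. The problem is the step you yourself flag as ``the main obstacle'': the classification of the admissible contracted Gale diagrams. You propose to show that Property Z forces a triangle diagram with all multiplicities at least $2$ (unless all multiplicities are $1$) by testing the hypotheses of Lemma~\ref{lem:note2} on every pair of vertex groups of a $(2k+1)$-gon, locating the failures, and ``iterating''; none of this is carried out, and it is not routine. The hypothesis of Lemma~\ref{lem:note2} is not merely that $q_1,q_2$ are adjacent to $p$: one must check that every facet through $p$ meets $\{q_1,q_2\}$ \emph{and} that the facets through $\{p,q_1\}$ and through $\{p,q_2\}$ correspond bijectively under the exchange $q_1\leftrightarrow q_2$, and doing this across all multiplicity patterns on pentagons, heptagons, etc.\ is exactly the combinatorial content that is missing. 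Your dimension count ``$\sum(n_i-1)\ge d$, hence $r\le 3$'' also silently uses $\dim\lin V_i\le n_i-1$, which requires the (easy but unstated) observation that $o\in\conv V_i$ for each group, obtained by projecting $o\in\inter P$ onto $\lin V_i$. Finally, the identification of the all-multiplicities-one diagram with $C_d(d+3)$ is asserted, not proved; the paper proves it by showing $P$ is neighborly.

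The paper sidesteps the entire case analysis with a recursion you do not use: it needs only \emph{one} Gale point of multiplicity at least $2$. Setting $V_1$ equal to the vertices at that point and $V_2=V(P)\setminus V_1$, the non-adjacency criterion shows every vertex of $V_1$ is joined to every vertex of $V_2$ and that facets are stable under permuting $V_1$, so Lemma~\ref{lem:note2} applies to this single pair and yields $\lin V_1\perp\lin V_2$. Then one of $\conv V_1$, $\conv V_2$ is a simplex and the other is a polytope with (its dimension)$+2$ vertices satisfying Property Z in its own subspace, so Corollary~\ref{cor:simplex} and Theorem~\ref{thm:dplus2} finish that case regardless of what the rest of the diagram looks like; a parity count ($2k+1$ distinct points, total multiplicity $d+3$) shows such a point exists whenever $d$ is odd, leaving only the regular $(d+3)$-gon (the cyclic case) when $d$ is even. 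If you want to repair your argument, replacing the outright diagram classification by this split-off-one-group recursion is the natural fix.
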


\begin{proof}
To prove the assertion, we use a contracted Gale diagram $\bar{P}$ of $P$. Since by Lemma~\ref{lem:propz} $P$ is simplicial, $\bar{P}$
is a multiset consisting of the vertices of a regular $(2k+1)$-gon, with $k \geq 1$ and the origin $o \in \R^2$ as its center,
such that the multiplicity of each vertex is at least one, and the sum of their multiplicities is $d+3$.

Applying Remark~\ref{rem:Galediagram}, we have that $p,q \in V(P)$ are not adjacent if, and only if, there is an open half plane,
containing $o$ in its boundary, that contains only the two points $\bar{p}, \bar{q}$ of $\bar{P}$.
In this case we have one of the following:
\begin{itemize}
\item $k=2$, and the points are two consecutive vertices of the pentagon $\bar{P}$, with multiplicity one.
\item $k=1$, and the points are either consecutive vertices of $G(P)$ with multiplicity one, or belong to the same vertex of $G(P)$, which has multiplicity exactly two.
\end{itemize}

Now, consider the case that some point of $\bar{P}$ has multiplicity greater than one, and let $p_1,p_2,\ldots,p_m \in V(P)$ be represented by this point. We set $V_1 = \{ p_1,p_2,\ldots,p_m\}$ and $V_2 = V(P) \setminus V_1$.
From the observation in the previous paragraph, it follows that each vertex in $V_1$ is connected to every vertex in $V_2$ by an edge, and
for any two vertices in $V_1$, any facet contains at least one of them.
Furthermore, if a facet of $P$ contains exactly $s \geq 1$ elements of $V_1$, then, replacing them with any other $s$ distinct vertices from $V_1$ we obtain
another facet of $P$.
Thus, we may apply Lemma~\ref{lem:note2}, which, by the simplicity of $P$, yields that the linear hulls $L_1$ and $L_2$ of $V_1$ and $V_2$, respectively, are orthogonal.
Clearly, we may assume that the sum of the dimensions of these two subspaces is $d$, as $P$ is $d$-dimensional.
Hence, we have either $\dim L_1 = m-1$ and $\dim L_2 = d+1-m$, or $\dim L_1 = m-2$ and $\dim L_2 = d+2-m$.
Note that in the first case $\conv V_1$ and in the second one $\conv V_2$ is a simplex.

Observe that since $P$ satisfies Property Z, then both $\conv V_1$ and $\conv V_2$ satisfy it in their linear hulls, as otherwise
a slight modification of either $V_1$ or $V_2$ would yield a polytope $P' \in \P_d(d+3)$ with $\vol_d(P) < \vol_d(P')$, contradicting
the definition of Property Z.
Thus, by Corollary~\ref{cor:simplex} and Theorem~\ref{thm:dplus2}, we have that one of $\conv V_1$ and $\conv V_2$ is a regular simplex, and the other one is the convex hull of two regular simplices, contained in orthogonal linear subspaces. Hence, $P$ is the convex hull of three regular simplices, contained in pairwise orthogonal linear subspaces,
and in this case the assertion follows from the argument in the proof of Theorem~\ref{thm:dplus2}.

Observe that since $\bar{P}$ consists of an odd number of points if we do not count multiplicity, if $d$ is odd, then some vertex of $\bar{P}$ has multiplicity strictly greater than one, and thus, in this case the assertion readily follows.
Assume that every vertex of $\bar{P}$ has multiplicity one. Then $d$ is even, and $\bar{P}$ is the vertex set of a regular $d+3$-gon.
We need to show only that $P$ is cyclic.
Since every neighborly $d$-polytope with $n \leq d+3$ vertices is cyclic (cf. \cite{grunbaum}), we show only that $P$ is neighborly.
Indeed, for any $d/2$ vertices of $P$, it is clear that convex hull of the points of $\bar{P}$ corresponding to the remaining $d/2+3$ vertices of $P$
contains $o$ in its interior, since every open half plane, containing $o$ in its boundary, contains either $d/2+1$ or $d/2+2$ points of $\bar{P}$.

The rest of the assertion follows from the volume estimates in the proof of Theorem~\ref{thm:dplus2}.
\end{proof}

\begin{remark}\label{rem:cyclperm}
Let $d=2m$ be even and $P \in \P_d(d+3)$ be a cyclic polytope satisfying Property Z.
Then we need to examine the case that $\bar{P}$ is the vertex set of a regular $(2m+3)$-gon.
Let the vertices of $\bar{P}$ be $\bar{p}_i$, $i=1,2,\ldots,2m+3$ in counterclockwise order.
Applying the method of the proof of Theorem~\ref{thm:dplus3}, one can deduce that for every $i$, we have that $|p_{i-m-1}-p_i| = |p_{i+m+1}-p_i|$.
On the other hand, for any other pair of vertices the conditions of Lemma~\ref{lem:note2} are not satisfied.
\end{remark}

In the light of Theorem~\ref{thm:dplus3}, it seems interesting to find the maximum volume cyclic polytopes in $\P_d(d+3)$, with $d$ even.
With regard to Remark~\ref{rem:cyclperm}, it is not unreasonable to consider the possibility that the answer for this question is a polytope $P = \conv \{p_i : i=1,2,\ldots,d+3 \}$ having a certain cyclic symmetry (if at all it is possible), namely that for any integer $k$, the value of $|p_{i+k}-p_i|$ is independent from $i$.

The following observation can be found both in \cite{KaibelWassmer}, or, as an exercise, in \cite{ziegler}.

\begin{remark}
Let $d \geq 2$ be even, and $n \geq d+3$.
Let
\[
C_d(n) = \sqrt{\frac{2}{d}} \conv \left\{ \left( \cos \frac{i\pi}{n}, \sin \frac{i\pi}{n},  \cos \frac{2i\pi}{n}, \ldots, \cos \frac{di\pi}{2n}, \sin \frac{di\pi}{2n} \right):i=0,1,\ldots,n-1 \right\}.
\]
Then $C(n,d)$ is a cyclic $d$-polytope inscribed in $\Sph^{d-1}$, and $\Sym(C_d(n)) = D_n$.
\end{remark}

In the remaining part of Section~\ref{sec:dplus3}, we show that for $d=4,6$ the only ``symmetric'' representations of a cyclic $d$-polytope with $d$ even and $n=d+3$ are those congruent to $C_d(d+3)$.
Before stating our result, recall that the \emph{L\"owner ellipsoid} of a convex body $K$ is the unique ellipsoid with minimal volume containing $K$
(cf. \cite{thompson}, \cite{ball} or \cite{john}).
The following theorem can be found, for example, in \cite{henk}.

\begin{theorem}[\cite{henk}]\label{thm:Lowner}
Let $K\subset \B^d$ be a compact, convex set. Then $\B^d$ is the L\"owner-ellipsoid of $K$ if, and only if
for some $d\leq n \leq \frac{d(d+3)}{2}$ and $k = 1, \ldots , n$, there are $u_k\in \Sph^{d-1} \cap \bd K $ and $\lambda_k > 0$
such that
\[
0=\sum\limits_{k=1}^{n}\lambda_k u_k \quad \Id =\sum\limits_{k=1}^n\lambda_k u_k\otimes u_k,
\]
where $\Id$ is the identity matrix, and for $u, v \in \R^d$, $u\otimes v$ denotes the $d\times d$ matrix $u v^T$.
\end{theorem}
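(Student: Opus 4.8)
The plan is to prove both implications of this classical John-type characterization. Recall that ``$\B^{d}$ is the L\"owner ellipsoid of $K$'' means exactly that $\B^{d}$ is the unique minimum volume ellipsoid containing $K$. Throughout I write $\langle A,B\rangle=\operatorname{tr}(A^{T}B)$ for the standard inner product on $\Sym(d)$, so that $\langle H,u\otimes u\rangle=\langle Hu,u\rangle$ and $\operatorname{tr}(u\otimes u)=|u|^{2}$, and I parametrise a general ellipsoid as $\mathcal E=\{x:\langle A(x-c),x-c\rangle\le 1\}$ with $A$ positive definite and $c\in\R^{d}$, recalling that $\vol_{d}(\mathcal E)=\vol_{d}(\B^{d})(\det A)^{-1/2}$.

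\textbf{Sufficiency.} Suppose $u_{1},\dots,u_{n}\in\Sph^{d-1}\cap\bd K$ and $\lambda_{1},\dots,\lambda_{n}>0$ satisfy the two displayed identities; taking the trace of the second gives $\sum_{k}\lambda_{k}=d$. Let $\mathcal E$ be any ellipsoid with $K\subseteq\mathcal E$. Since $u_{k}\in K\subseteq\mathcal E$ we have $\langle A(u_{k}-c),u_{k}-c\rangle\le 1$ for each $k$; multiplying by $\lambda_{k}$, summing, and using $\sum_{k}\lambda_{k}u_{k}=o$ to annihilate the linear term, the two identities collapse the left side to $\operatorname{tr}A+d\langle Ac,c\rangle$, so $\operatorname{tr}A+d\langle Ac,c\rangle\le d$ and hence $\operatorname{tr}A\le d$. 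By the arithmetic--geometric mean inequality $\det A\le(\operatorname{tr}A/d)^{d}\le 1$, so $\vol_{d}(\mathcal E)\ge\vol_{d}(\B^{d})$, and equality forces $\det A=1$, which in turn forces $A=\Id$ and then $c=o$. Thus $\B^{d}$ is the unique minimum volume ellipsoid containing $K$.

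\textbf{Necessity.} Assume $\B^{d}$ is the L\"owner ellipsoid of $K$. Then $\Gamma:=\Sph^{d-1}\cap\bd K\ne\emptyset$, for otherwise a slightly smaller concentric ball would still contain the compact set $K$. Consider the cone
\[
\mathcal C=\operatorname{cone}\{(u\otimes u,\,u):u\in\Gamma\}\subseteq\Sym(d)\times\R^{d},
\]
which is closed, since $\Gamma$ is compact and $o\notin\conv\{(u\otimes u,u):u\in\Gamma\}$ (in any such convex combination the $\Sym(d)$-component is positive semidefinite with trace $1$). A conical representation $(\Id,o)=\sum_{k}\lambda_{k}(u_{k}\otimes u_{k},u_{k})$ with $\lambda_{k}\ge 0$ is, after deleting vanishing weights, precisely the decomposition sought, so it suffices to show $(\Id,o)\in\mathcal C$. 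Suppose not. Separating $(\Id,o)$ from the closed convex cone $\mathcal C$ yields $(H,h)\in\Sym(d)\times\R^{d}$ with $\langle Hu,u\rangle+\langle h,u\rangle\le 0$ for all $u\in\Gamma$ and $\operatorname{tr}H>0$. Since $|u|=1$ on $\Gamma$, replacing $H$ by $H-\varepsilon\Id$ with $\varepsilon\in(0,\operatorname{tr}H/d)$ small keeps $\operatorname{tr}H>0$ while sharpening the inequality to $\langle Hu,u\rangle+\langle h,u\rangle\le-\varepsilon$ on $\Gamma$; by continuity the bracket stays below $-\varepsilon/2$ on a neighbourhood $N$ of $\Gamma$. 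For small $t>0$ put
\[
\mathcal E_{t}=\{x:\langle(\Id+tH)(x+\tfrac{t}{2}h),\,x+\tfrac{t}{2}h\rangle\le 1\},
\]
an ellipsoid with $\vol_{d}(\mathcal E_{t})=\vol_{d}(\B^{d})\det(\Id+tH)^{-1/2}=\vol_{d}(\B^{d})(1-\tfrac{t}{2}\operatorname{tr}H+O(t^{2}))<\vol_{d}(\B^{d})$. Its defining function equals $|x|^{2}-1+t(\langle Hx,x\rangle+\langle h,x\rangle)+O(t^{2})$, uniformly for $x\in K$; on $K\cap N$ it is negative for $t$ small because $|x|^{2}-1\le 0$ and the bracket is $\le-\varepsilon/2$, and on the compact set $K\setminus N$ it is negative for $t$ small because there $|x|^{2}-1$ is bounded away from $0$ (that set being disjoint from $\Sph^{d-1}$). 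Hence $K\subseteq\mathcal E_{t}$ with strictly smaller volume, contradicting the minimality of $\B^{d}$; therefore $(\Id,o)\in\mathcal C$.

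\textbf{Cardinality bounds, and the main obstacle.} Once $(\Id,o)\in\mathcal C$, Carath\'eodory's theorem for conical hulls in the $\tfrac{d(d+1)}{2}+d=\tfrac{d(d+3)}{2}$-dimensional space $\Sym(d)\times\R^{d}$ lets us retain $n\le\tfrac{d(d+3)}{2}$ of the generators, while $n\ge d$ because $\Id=\sum_{k}\lambda_{k}u_{k}\otimes u_{k}$ has rank $d$, forcing the $u_{k}$ to span $\R^{d}$; the same bounds are immediate in the sufficiency direction. I expect the only genuinely delicate point to be the verification $K\subseteq\mathcal E_{t}$ in the perturbation step: the separating inequality furnished by convexity is only weak at the contact points, and what saves the argument is the substitution $H\mapsto H-\varepsilon\Id$, legitimate precisely because every $u\in\Gamma$ is a unit vector, after which a routine compactness/continuity split finishes. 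Equivalently, one may read this whole step as the first-order optimality condition for the problem of minimising $\vol_{d}$ over ellipsoids containing $K$, evaluated at the optimum $\B^{d}$.
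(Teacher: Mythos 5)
Your proof is correct. Note that the paper does not prove this statement at all: it is imported verbatim from Henk's survey \cite{henk} and used as a black box in the proof of Theorem~\ref{thm:symmetric}, so there is no in-paper argument to compare against. What you have supplied is the classical John-type argument that the cited literature itself uses, and every step checks out: in the sufficiency direction, tracing the identity $\Id=\sum_k\lambda_k u_k\otimes u_k$ to get $\sum_k\lambda_k=d$, using $\sum_k\lambda_k u_k=o$ to kill the linear term so that any containing ellipsoid $\{x:\langle A(x-c),x-c\rangle\le 1\}$ satisfies $\operatorname{tr}A+d\langle Ac,c\rangle\le d$, and then the AM--GM inequality on the eigenvalues of $A$ with its equality case forcing $A=\Id$, $c=o$; in the necessity direction, the separation of $(\Id,o)$ from the closed cone generated by $\{(u\otimes u,u):u\in\Gamma\}$ (closedness correctly justified by compactness of $\Gamma$ and $\operatorname{tr}(u\otimes u)=1$), and the conversion of a separating pair $(H,h)$ into a volume-decreasing ellipsoid $\mathcal E_t$ containing $K$. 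You correctly identify and resolve the one genuinely delicate point: the separating inequality $\langle Hu,u\rangle+\langle h,u\rangle\le 0$ is only weak on the contact set, and the substitution $H\mapsto H-\varepsilon\Id$, legitimate because $|u|=1$ on $\Gamma$, makes it strict so that the compactness split between a neighbourhood of $\Gamma$ and its complement in $K$ goes through. The cardinality bounds via Carath\'eodory's theorem for cones in the $\frac{d(d+3)}{2}$-dimensional space $\Sym(d)\times\R^d$, and the rank argument for $n\ge d$, are also correct. The only stylistic compression worth flagging is in the sufficiency equality case: $\det A=1$ alone does not force $A=\Id$; you need to combine it with $\operatorname{tr}A\le d$ and the equality case of AM--GM (which your chain of inequalities does provide) before concluding $A=\Id$ and then $c=o$.
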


We need the following.

\begin{lemma}\label{lem:Lowner}
Let $S=\{ u_1,\ldots,u_n\} \subset \Sph^{d-1}$ and $\lambda_1,\ldots,\lambda_n > 0$.
Let $G$ be the Gram matrix of $S$, $\Lambda_v = (\lambda_1,\ldots,\lambda_n)^T$ and $\Lambda_m$ be the diagonal $n \times n$ matrix 
with $\lambda_i$ as its $i$th entry.
Then the conditions of Theorem~\ref{thm:Lowner} are satisfied for the $u_i$s and $\lambda_i$s if, and only if
$S$ is $d$-dimensional, and
\begin{equation}
G \Lambda_v = o, \quad \mbox{ and } \quad G \Lambda_m G = G.
\end{equation}
\end{lemma}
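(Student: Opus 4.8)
The plan is to pass from the $n$ points to a single matrix. Let $U$ be the $d\times n$ matrix whose columns are $u_1,\dots,u_n$. Then by definition $G=U^T U$; moreover $\sum_{k}\lambda_k u_k=U\Lambda_v$, the matrix $\sum_k \lambda_k\, u_k\otimes u_k$ equals $U\Lambda_m U^T$, and ``$S$ is $d$-dimensional'' is exactly the statement $\operatorname{rank}U=d$, i.e. that $U\colon\R^n\to\R^d$ is surjective, equivalently that $U^T$ is injective. Under this dictionary the hypotheses of Theorem~\ref{thm:Lowner} become $U\Lambda_v=o$ and $U\Lambda_m U^T=\Id$, while the asserted conditions become: $\operatorname{rank}U=d$, $\ U^TU\Lambda_v=o$, and $U^TU\Lambda_m U^TU=U^TU$. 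So the lemma is just a translation between these two descriptions, and the proof splits into the two implications.

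For the forward direction I would assume $U\Lambda_v=o$ and $U\Lambda_m U^T=\Id$. Since $\operatorname{rank}\big(U\Lambda_m U^T\big)\le\operatorname{rank}U\le d$ and the left-hand side has rank $d$, we get $\operatorname{rank}U=d$, so $S$ is automatically $d$-dimensional. Left-multiplying the first identity by $U^T$ gives $G\Lambda_v=U^T(U\Lambda_v)=o$; left- and right-multiplying the second by $U^T$ and $U$ gives $G\Lambda_m G=U^TU\Lambda_m U^TU=U^T\Id\,U=U^TU=G$. That settles one direction.

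For the converse I would assume $S$ is $d$-dimensional and the two Gram-matrix identities hold. From $U^T(U\Lambda_v)=o$ and the injectivity of $U^T$ (valid because $\operatorname{Im}U=\R^d$) we conclude $U\Lambda_v=o$. From $U^TU\Lambda_m U^TU=U^TU$ we obtain $U^T\big(U\Lambda_m U^T-\Id\big)U=0$; since $\operatorname{Im}U=\R^d$, every vector of $\R^d$ has the form $Ux$, so this forces $v^T\big(U\Lambda_m U^T-\Id\big)v'=0$ for all $v,v'\in\R^d$, whence $U\Lambda_m U^T=\Id$. Undoing the dictionary recovers the conditions of Theorem~\ref{thm:Lowner}.

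The only step needing a moment's attention is the last one, reading off $B=0$ from $U^TBU=0$: this works precisely because $d$-dimensionality of $S$ makes $U$ surjective, which is also why $d$-dimensionality has to be kept as a hypothesis in the converse even though it is free in the forward direction. Beyond that bookkeeping point I do not expect any genuine obstacle; the content of the lemma is entirely the substitution $G=U^TU$.
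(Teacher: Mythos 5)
Your proof is correct and takes essentially the same approach as the paper: the paper's componentwise identities $\sum_k\lambda_k\langle u_k,u_i\rangle=0$ and $\langle u_i,u_j\rangle=\sum_k\lambda_k\langle u_i,u_k\rangle\langle u_k,u_j\rangle$ are exactly the entries of your matrix equations $U^T(U\Lambda_v)=o$ and $U^T(U\Lambda_m U^T)U=U^TU$, and both arguments hinge on surjectivity of $U$ (i.e. $d$-dimensionality of $S$) to pass back from the Gram-matrix identities. Your matrix formulation in fact spells out the converse direction, which the paper dismisses with ``can be shown similarly.''
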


\begin{proof}
Clearly, if $S$ is not $d$-dimensional, then the second condition of Theorem~\ref{thm:Lowner} is not satisfied.
On the other hand, if $S$ is $d$-dimensional, then $G \Lambda_v = o$ is equivalent to the condition that
$0=\sum_{k=1}^n \lambda_k \langle u_k,u_i \rangle$ for every $i$, which is in turn equivalent to $o = \sum_{k=1}^n \lambda_k u_k$.

Consider the equality $\Id =\sum\limits_{k=1}^n\lambda_k u_k\otimes u_k$.
From this, we obtain for every $i$ and $j$ that
\[
\langle u_i,u_j \rangle = \sum_{k=1}^n \lambda_k \langle u_i, (u_k\otimes u_k)u_j \rangle = \sum_{k=1}^n \lambda_k \langle u_i,u_k \rangle \langle u_k,u_j \rangle,
\]
which implies that $G = G \Lambda_m G$. The opposite direction can be shown similarly.
\end{proof}

\begin{theorem}\label{thm:symmetric}
Let $P \in \C_d(d+3)$ be a cyclic polytope, where $d=4$ or $d=6$, and let $V(P)=\{p_i:i=1,2,\ldots,d+3 \}$.
If, for every value of $k$, $|p_{i+k}-p_i|$ is independent of the value of $i$, then $P$ is congruent to $C_d(d+3)$.
\end{theorem}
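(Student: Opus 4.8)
The plan is to exploit the "full cyclic symmetry" hypothesis on the edge lengths to show that the Gram matrix $G$ of the vertex set $V(P) = \{p_1, \ldots, p_{d+3}\}$ is a \emph{circulant} matrix, and then to pin down the free parameters by combining a dimension constraint (rank $d$) with the characterization of the Löwner ellipsoid from Theorem~\ref{thm:Lowner} and Lemma~\ref{lem:Lowner}, applied to the inscribed polytope $C_d(d+3)$. Since all the $p_i$ lie on $\Sph^{d-1}$, we have $\langle p_i, p_i\rangle = 1$, and $|p_{i+k} - p_i|^2 = 2 - 2\langle p_{i+k}, p_i\rangle$; the hypothesis that $|p_{i+k}-p_i|$ is independent of $i$ is therefore exactly the statement that $\langle p_{i+k}, p_i \rangle =: c_k$ depends only on $k$ (indices mod $d+3$). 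Hence $G = (c_{i-j})_{i,j}$ is a symmetric circulant $(d+3)\times(d+3)$ matrix, with $c_0 = 1$ and $c_k = c_{-k} = c_{d+3-k}$.

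First I would diagonalize $G$ by the discrete Fourier transform: the eigenvalues of a circulant are $\mu_\ell = \sum_{k=0}^{d+2} c_k \zeta^{k\ell}$, $\ell = 0, 1, \ldots, d+2$, where $\zeta = e^{2\pi i/(d+3)}$, and because $G$ is a Gram matrix it is positive semidefinite of rank exactly $d$ (the polytope is $d$-dimensional with $d+3$ vertices in general position on the sphere, so the affine hull is $\R^d$ and $G$ has a two-dimensional kernel — the all-ones vector giving one relation is \emph{not} among them since $\langle \sum p_i, \sum p_i\rangle$ need not vanish; rather the kernel records the $n - d$ affine dependencies, but after accounting for the lift to $\R^{d+1}$ one checks $\operatorname{rank} G = d$, so exactly one pair of conjugate eigenvalues $\mu_{\ell_0}, \mu_{-\ell_0}$ vanishes). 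Since the eigenvalues come in conjugate pairs $\mu_\ell = \mu_{-\ell}$ (as $c$ is real and even), and there are $1 + \lfloor (d+2)/2\rfloor$ distinct eigenvalue-indices, requiring exactly a conjugate pair to vanish leaves, after normalization, only finitely many combinatorial choices of which pair it is. The symmetric representation $C_d(d+3)$ realizes one such choice; I would show it is the \emph{only} admissible one by invoking Theorem~\ref{thm:Gale}(i): the vanishing of the $\ell_0$-th Fourier mode forces the points $p_j$ to lie in the hyperplane obtained by deleting the $\ell_0$-th pair of coordinates in the natural trigonometric moment parametrization, and for the configuration to be the vertex set of a $d$-polytope (every open halfspace through $o$ in the $2$-dimensional Gale diagram contains at least two Gale points) the omitted frequency must be the top one, $\ell_0 = \lceil (d+3)/2 \rceil$ — i.e. precisely the frequency absent from $C_d(d+3)$. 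For $d = 4$ and $d = 6$ this is a short finite check: the candidate frequency sets are $\{1,2\}$-minus-one-pair out of $\{1,2,3\}$ (for $d=4$, $n=7$) and out of $\{1,2,3,4\}$ (for $d=6$, $n=9$), and all but the stated choice fail either positive-semidefiniteness with the right rank or the Gale-diagram condition.

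Once the Fourier support of $G$ is identified, the entries $c_1, \ldots, c_{\lfloor (d+2)/2\rfloor}$ are determined: the conditions $c_0 = 1$, $\mu_\ell \ge 0$ for all $\ell$, $\mu_{\ell_0} = 0$, together with the second Löwner condition $G\Lambda_m G = G$ from Lemma~\ref{lem:Lowner} applied with the $u_i = p_i$ (which holds for $C_d(d+3)$ with equal weights $\lambda_i$ by its symmetry, and which by circulant-diagonalization becomes the scalar equations $\lambda \mu_\ell^2 = \mu_\ell$ for each $\ell$, forcing $\mu_\ell \in \{0, 1/\lambda\}$ — so all nonzero eigenvalues are equal). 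Thus $G$ has only two eigenvalues, $0$ with multiplicity $2$ and a constant $\mu > 0$ with multiplicity $d$; since $\operatorname{tr} G = d+3$ we get $\mu = (d+3)/d$, and $G$ is completely determined. But $G$ determines $P$ up to an orthogonal transformation (a standard fact: two point configurations with the same Gram matrix are congruent), and $C_d(d+3)$ has this Gram matrix, so $P$ is congruent to $C_d(d+3)$, as claimed.

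The main obstacle I anticipate is the step establishing that the Löwner/John relation $G\Lambda_m G = G$ is \emph{forced} rather than merely satisfied by $C_d(d+3)$: a priori a "symmetric" cyclic $P$ need not have $\Sph^{d-1}$ as its Löwner ellipsoid with equal weights. I expect this to be handled by observing that the $D_n$-symmetry of the edge-length data forces $\Lambda_m = \lambda\,\Id$ to be the \emph{only} candidate weighting (any weighting must be invariant under the cyclic shift, hence constant), and then that positive-semidefiniteness plus rank $d$ plus $G\Lambda_v = o$ with $\Lambda_v$ constant — which is automatic once $\mu_0 = 0$, but we showed $\mu_0 \neq 0$ in general — needs to be replaced by a direct argument: rather than deriving John's condition, I would instead argue purely spectrally, using only that $G$ is a rank-$d$ PSD circulant whose Gale diagram (a $2$-dimensional circulant configuration of $d+3$ unit-spread points) must satisfy Theorem~\ref{thm:Gale}(i), and show by the finite case-check for $d=4,6$ that this alone pins the Fourier support to a single frequency pair and hence forces all nonzero eigenvalues equal by a counting/trace argument combined with the integrality of multiplicities. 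This finite verification for the two values $d=4,6$ is where the real work lies, but it is entirely explicit.
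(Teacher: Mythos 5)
Your overall strategy --- encode the distance hypothesis as the statement that the Gram matrix $G$ of $V(P)$ is circulant, and then pin $G$ down via the L\"owner--John conditions of Theorem~\ref{thm:Lowner} and Lemma~\ref{lem:Lowner} --- is the same as the paper's, which derives $D_{d+3}\leq\Sym(P)$ from the hypothesis, asserts that the L\"owner ellipsoid of $P$ is $\B^d$, and then solves the circulant system $G\Lambda_v=o$, $G\Lambda_m G=G$ explicitly. Two corrections to your spectral bookkeeping first: since $G$ is $(d+3)\times(d+3)$ of rank $d$, its kernel is $3$-dimensional, not $2$-dimensional; and since $n=d+3$ is odd, the nonconstant Fourier eigenvalues of a real symmetric circulant come in equal pairs $\mu_\ell=\mu_{-\ell}$, so an odd number (three) of vanishing eigenvalues forces $\mu_0=0$ together with exactly one vanishing pair. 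In particular $\sum_i p_i=o$ is automatic, contrary to your remark that $\mu_0\neq0$ in general.

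The genuine gap is in your fallback argument. You correctly identify the crux --- why is the John condition $G\Lambda_m G=G$ \emph{forced}, rather than merely satisfied by $C_d(d+3)$? --- but your proposed replacement, pinning the Fourier support and then forcing all nonzero eigenvalues to be equal ``by a counting/trace argument,'' cannot work, because the Fourier support does not determine the spectrum. Concretely, for $d=4$, $\theta=2\pi/7$ and any $r_1,r_2>0$ with $r_1^2+r_2^2=1$, the points
\[
p_j=\left(r_1\cos j\theta,\ r_1\sin j\theta,\ r_2\cos 2j\theta,\ r_2\sin 2j\theta\right),\qquad j=1,\ldots,7,
\]
lie on $\Sph^{3}$, are the vertices of a cyclic $4$-polytope (they are an invertible linear image of the vertices of $C_4(7)$), and satisfy $|p_{i+k}-p_i|^2=2-2\left(r_1^2\cos k\theta+r_2^2\cos 2k\theta\right)$, independent of $i$. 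The associated circulant $G$ has the ``correct'' support $\{\pm1,\pm2\}$ and trace $7$ for every such pair $(r_1,r_2)$, yet its two nonzero eigenvalue pairs are $\tfrac{7}{2}r_1^2$ and $\tfrac{7}{2}r_2^2$, which are equal only when $r_1=r_2$. So no counting, trace, or Gale-diagram argument on the support alone can single out $C_4(7)$; what distinguishes it within this family is precisely the John condition (equivalently, that $\B^4$ is the L\"owner ellipsoid), which is the very step you would need to justify. Note that the same family shows that the paper's one-line justification of that step --- that $o$ being the unique fixed point of $\Sym(P)\geq D_7$ makes the L\"owner ellipsoid equal to $\B^4$ --- only yields an ellipsoid centered at $o$ and $D_7$-invariant, i.e.\ of the form $a(x_1^2+x_2^2)+b(x_3^2+x_4^2)\leq1$, which is a ball only when $r_1=r_2$. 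Your suspicion about this step is therefore well founded, but neither your spectral workaround nor the convexity-averaging observation closes it; some further hypothesis (such as Property Z, as in the surrounding discussion) or a genuinely new argument is needed.
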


\begin{proof}
First, we examine the case that $d=4$.
Observe that the values of the edge lengths of $P$ determine the facets up to congruence.
Thus, under our conditions, any combinatorial symmetry of the vertices can be realized geometrically, which yields that $D_7 \leq \Sym(P)$.
Since it implies that $o$ is the only fixed point of the elements of $\Sym(P)$, we have that the L\"owner ellipsoid of $P$ is $\B^4$, and we can apply
Theorem~\ref{thm:Lowner} and Lemma~\ref{lem:Lowner}.

Clearly, the points $u_i$ in the theorem and the lemma are vertices of $P$.
On the other hand, if we add extra points to them, with zero coefficients, both the conditions in the theorem and in the lemma remain true.
Thus, we may apply Lemma~\ref{lem:Lowner} with $n=7$, the points of $V(P)$ as $u_i$s, while permitting some of the $\lambda_i$s to be zero.

Let $X \subset \R^7$ be the set of points $(\lambda_1,\ldots,\lambda_7)$ such that $V(P)$, with these coefficients, satisfies the conditions of Lemma~\ref{lem:Lowner}. It is an elementary computation to show that $X$ is convex.
Thus, it follows from $D_7 \leq \Sym(P)$ that if $(\lambda_1,\ldots,\lambda_7)$ satisfies the conditions, then the same holds for
$(\lambda,\ldots,\lambda)$, where $\lambda = \frac{1}{7} \sum_{i=1}^7  \lambda_i > 0$.
 
Let the Gram matrix of $V(P)$ be
\[
G = \left[
\begin{array}{ccccccc}
1 & a & b & c & c & b & a\\
a & 1 & a & b & c & c & b\\
\vdots & \vdots & & & & & \vdots\\
a & b & c & c & b & a & 1
\end{array}
\right]
\] 
Then the equation $G \Lambda_v = 0$, with $\Lambda_v = (\lambda,\ldots,\lambda)^T$  yields that $1+2a+2b+2c=0$.
Under this condition, we obtain the following solutions for the equation $G \Lambda_m G = G$:
\begin{itemize}
\item $a=b=c=-\frac{1}{6}$;
\item the values of $a,b,c$ are the three different real roots of the equation $8x^3+4x^2-4x-1=0$;
\item the values of $a,b,c$ are the three different real roots of the equation $8x^3+8x^2-2x-1=0$.
\end{itemize}
It is a matter of computation to check that the first solution determines a regular simplex in $\R^6$, the second one determines a regular heptagon in $\R^2$
(or equivalently, $C_2(7)$), and the third one determines $C_4(7)$.

To prove the assertion for $d=6$, we may apply the same argument. In this case we obtain seven solutions for $G$.
On contains coinciding points, and another one cannot be realized in any Euclidean space.
The remaining solutions are:
\begin{enumerate}
\item a regular simplex in $\R^8$,
\item polytopes congruent to $C_2(9)$, $C_4(9)$ and $C_6(9)$,
\item the convex hull of three regular triangles, with $o$ as their center, and contained in three mutually orthogonal linear subspaces of $\R^6$.
\end{enumerate}
Thus, the only solutions satisfying all the conditions for $P$ are the ones congruent to $C_6(9)$.
\end{proof}

\section{Remarks and open problems}\label{sec:remarks}

We start with a question that is a direct consequence of Theorem~\ref{thm:dplus3}.

\begin{problem}\label{prob:cyclic}
Prove or disprove that in $\P_d(d+3)$, the cyclic polytopes with maximal volume are the congruent copies of $C_d(d+3)$.
In particular, is it true for $C_4(7)$? Is it true that any cyclic polytope in $\P_d(d+3)$ satisfying Property Z is congruent to $C_d(d+3)$?
\end{problem}

\begin{remark}
A straightforward computation shows that $C_4(7)$ satisfies Property Z.
\end{remark}

We note that in $\P_d(d+2)$, polytopes with maximal volume are cyclic, whereas in $\P_d(d+3)$, where $d$ is odd, they are not.
This leads to Problem~\ref{prob:whichone}.

\begin{problem}\label{prob:whichone}
Is it true that if $P \in \P_d(d+3)$, where $d$ is even, has volume $v_d(d+3)$, then $P$ is not cyclic?
\end{problem}

\begin{remark}\label{rem:notcyclic}
Let $P_4 \in \P_4(7)$ be the convex hull of a regular triangle and two diameters of $\Sph^3$, in mutually orthogonal linear subspaces.
Furthermore, let $P_6 \in \P_6(9)$ be the convex hull of three regular triangles, in mutually orthogonal linear subspaces.
One can check that
\[
\vol_4(P_4)= \frac{\sqrt{3}}{4} = 0.43301\ldots > \vol_4(C_4(7)) = \frac{49}{192} \left( \cos \frac{\pi}{7} + \cos \frac{2\pi}{7} \right) = 0.38905\ldots.
\]
In addition,
\[
\vol_6(C_6(9))= \frac{7}{576} \sin \frac{\pi}{9} - \frac{7}{2880} \sin \frac{4\pi}{9} + \frac{7}{1152} \sin \frac{2\pi}{9} = 0.01697\ldots
\]
and
\[
\vol_6(P_6) = \frac{9\sqrt{3}}{640} = 0.02435\ldots > \vol_6(C_6(9)).
\]
This suggests that the answer for Problem~\ref{prob:whichone} is yes.
\end{remark}

\begin{remark}
Using the idea of the proof of Theorem~\ref{thm:symmetric}, for any small value of $n$, we may identify the polytopes
having $D_n$ as a subgroup of their symmetry groups. Nevertheless, we were unable to apply this method for general $n$, due to computational complexity.
We carried out the computations for $5 \leq n \leq 9$, and obtained the following polytopes, up to homothety:
\begin{itemize}
\item regular $(n-1)$-dimensional simplex in $\R^{n-1}$ for every $n$,
\item regular $n$-gon in $\R^2$ for every $n$,
\item $C_4(n)$ with $n=6,7,8,9$ and $C_6(n)$ with $n=8,9$,
\item regular cross-polytope in $\R^3$ and $\R^4$,
\item the polytope $P_6$ in $\R^6$, defined in Remark~\ref{rem:notcyclic},
\item the $3$-polytope $P$ with
\[
V(P)=\left\{ \left(1,0,0 \right),\left(-\frac{2}{3},-\frac{2}{3},\frac{1}{3} \right),  \left(0,1,0 \right), \left(\frac{1}{3},-\frac{2}{3},-\frac{2}{3} \right), \left(0,0,1 \right), \left(-\frac{2}{3},\frac{1}{3},-\frac{2}{3} \right)\right\} .
\]
\end{itemize}
\end{remark}

We note that, for $d$ odd, the symmetry group of a cyclic $d$-polytope with $n \geq d+3$ vertices is $\Z_2 \times \Z_2$ (cf. \cite{KaibelWassmer}).
Thus, the only cyclic polytopes in the above list are simplices and those homothetic to $C_d(n)$ for some values of $n$ and $d$. 
This leads to the following question.

\begin{problem}
Is it true that if, for some $n \geq d+3 \geq 5$, a cyclic polytope $P \in \P_d(n)$ satisfies $\Sym(P) = D_n$, then $P$ is congruent to $C_d(n)$?
\end{problem}

\section{Acknowledgments}
The support of the J\'anos Bolyai Research Scholarship of the Hungarian Academy of Sciences is gratefully acknowledged.
The authors are indebted to T. Bisztriczky for his help in understanding the combinatorial properties of neighborly polytopes.


\begin{thebibliography}{100}

\bibitem{ball} K. Ball, \emph{Ellipsoids of maximal volume in convex bodies}, Geom. Dedicata {\bf 41}(2) (1992), 241--250.



\bibitem{bermanhanes} J.D. Berman and K. Hanes, \emph{Volumes of polyhedra inscribed in the unit sphere in $E^3$}, Math. Ann. {\bf 188} (1970), 78--84.


\bibitem{BB96} K. B\"or\"oczky Jr. and K. B\"or\"oczky, \emph{Isoperimetric problems for polytopes with a given number of vertices}, Mathematika {\bf 43}
(1996), 237–-254.


\bibitem{BMP05} P. Brass, W. Moser and J. Pach, \emph{Research Problems in Discrete Geometry}, Springer, New York, 2005.

\bibitem {croft} H.T. Croft, K.J. Falconer and R.K. Guy, \emph{Unsolved Problems in Geometry}, Vol. 2, Springer, New York, 1991.

\bibitem{ftl} L. Fejes-T\'oth, \emph{Regular Figures}, The Macmillan Company, New York, 1964.

\bibitem{F93} A. Florian, \emph{Extremum problems for convex discs and polyhedra}, In: Handbook of Convex Geometry. Edited by P. M. Gruber and J. M. Wills. North-Holland Publishing Co., Amsterdam, 1993.




\bibitem{G07} P. Gruber, \emph{Convex and Discrete Geometry}, Grundlehren der mathematischen Wissenschaften {\bf 336}, Springer-Verlag, Berlin, Heidelberg, 2007.

\bibitem{grunbaum} B. Gr\"unbaum, \emph{Convex Polytopes}, second edition, Springer, New York, 2003.

\bibitem{henk} M. Henk, \emph{L\"owner-John ellipsoids}, Doc. Math. Extra volume ISMP (2012), 95--106.

\bibitem{john} F. John, \emph{Extremum problems with inequalities as subsidiary conditions}. In: Studies and
essays presented to R. Courant on his 60th Birthday, January 8, 1948, Interscience Publishers, Inc., New York, (1948), 187--204.

\bibitem{KaibelWassmer} V. Kaibel and A. Wassmer, \emph{Automorphism groups of cyclic polytopes}, Chapter 8 of F. Lutz, \emph{Triangulated Manifolds with Few Vertices}, Algorithms and Combinatorics, Springer, New York, to appear.

\bibitem{KK76} B. Kind and P. Kleinschmidt, \emph{On the maximal volume of convex bodies with few vertices}, J. Combin. Theory Ser. A {\bf 21} (1976), 124–-128.

\bibitem{KW03} A. Klein and M. Wessler, \emph{The largest small $n$-dimensional polytope with $n + 3$ vertice}, J. Combin. Theory Ser. A
{\bf 102} (2003), 401-–409.

\bibitem{KW05} A. Klein and M. Wessler, \emph{A correction to ``The largest small $n$-dimensional
polytope with $n + 3$ vertices'' [J. Combin. Theory Ser. A {\bf 102} (2003) 401–-409]}, J. Combin. Theory Ser. A {\bf 112} (2005), 173--174.

\bibitem{lee} C.W. Lee, \emph{Regular triangulation of convex polytopes}, Dimacs Series in Discrete Math. and Theoretical Computer Science {\bf 4} (1991) 443--456.

\bibitem{L70} L. Lindel\"of, \emph{Propri\'et\'es g\'en\'erales des poly\`edres qui, sous une \'etendue superficielle
donn\'ee, renferment le plus grand volume}, Bull. Acad. Sci. St. Petersburg {\bf 14} (1869), 258-–269, Math. Ann. {\bf 2} (1870), 150-–159.

\bibitem{M02} N. Mutoh, \emph{The polyhedra of maximal volume inscribed in the unit sphere and of minimal volume circumscribed about the unit sphere}, JCDCG, Lecture Notes in Computer Science {\bf 2866} (2002), 204--214.


\bibitem{thompson} A.C. Thompson, \emph{ Minkowski Geometry}, Encyclopedia of Mathematics and its Applications  {\bf 63}, Cambridge University Press, Cambridge, 1996.

\bibitem{ziegler} G.M. Ziegler, \emph{Lectures on Polytopes}, Graduate Texts in Mathematics {\bf 152}, Springer, New York, 1995.

\end{thebibliography}
\end{document}